\newtheorem{theorem}{Theorem}[section]
\newtheorem{corollary}[theorem]{Corollary}
\newtheorem{lemma}[theorem]{Lemma}
\newtheorem{proposition}[theorem]{Proposition}
\theoremstyle{definition}
\newtheorem{definition}[theorem]{Definition}
\theoremstyle{remark}
\newtheorem{remark}[theorem]{\sc Remark}
\newtheorem{example}[theorem]{\sc Example}
\renewcommand{\Box}{\square}    %\diamond
\renewcommand{\i}{{\rm{int}}}
\newcommand{\Sing}{{\mathrm{Sing\hspace{2pt}}}}
\newcommand{\Disc}{{\mathrm{Disc\hspace{2pt}}}}
\newcommand{\ord}{{\mathrm{ord}}}
\newcommand{\im}{{\mathrm{Im\hspace{2pt}}}}
\newcommand{\ity}{{\infty}}
\newcommand{\NT}{{\mathrm{NT}}}
\newcommand{\e}{\varepsilon}
\newcommand{\m}{\setminus}
\newcommand{\s}{\subset}
\newcommand{\fin}{\hspace*{\fill}$\Box$\vspace*{2mm}}
\newcommand{\cC}{{\mathcal C}}
\newcommand{\bR}{{\mathbb R}}
\newcommand{\bC}{{\mathbb C}}
\newcommand{\bN}{{\mathbb N}}
\newcommand{\bK}{{\mathbb K}}
\newcommand{\bH}{{\mathbb H}}
\newcommand{\bS}{{\mathbb S}}
\newcommand{\bW}{{\mathbb W}}
\begin{document}

%\begin{color}{cyan}
%	\begin{enumerate}
		 
%\item find references for the places ``............''  page 1 !!

%\end{enumerate}
%\end{color}

\title[Fibrations of highly singular map germs]{Fibrations of highly singular map germs}

\author{\sc Raimundo N. Ara\'ujo dos Santos}
\address{ICMC,
Universidade de S\~ao Paulo,  Av. Trabalhador S\~ao-Carlense, 400 -
CP Box 668, 13560-970 S\~ao Carlos, S\~ao Paulo,  Brazil}
\email{rnonato@icmc.usp.br, raimundo.icmc@gmail.com}

\author{\sc Maico F. Ribeiro}
\address{ICMC,
Universidade de S\~ao Paulo,  Av. Trabalhador S\~ao-Carlense, 400 -
CP Box 668, 13560-970 S\~ao Carlos, S\~ao Paulo,  Brazil}
\email{maico.ribeiro@ufes.br}

\author{Mihai Tib\u ar}
\address{Univ. Lille, CNRS, UMR 8524 -- Laboratoire Paul Painlev\'e, F-59000 Lille,
France}  
\email{tibar@math.univ-lille1.fr}

\subjclass[2010]{32S55, 14D06, 58K05, 57R45, 14P10, 32S20, 58K15, 57Q45, 32C40, 32S60}

\keywords{singularities of real analytic maps,  Milnor fibrations, mixed functions}

\thanks{The authors acknowledge the support of USP-COFECUB Uc Ma 163-17.  M. Tibar acknowledges
the support of  Labex CEMPI (ANR-11-LABX-0007-01), and  R.N. Ara\'ujo dos Santos acknowledges the support of Fapesp grant 2017/20455-3 and CNPq research-grant 313780/2017-0.}

\begin{abstract}
 We define  \emph{local fibration structures} for  real map germs with strictly positive dimensional discriminant:
 a local fibration structure over the complement of the discriminant, and a complete local fibration structure which includes the stratified discriminant into the picture. We provide new classes of map germs endowed with
such local fibration structures.

\end{abstract}

\maketitle
%%%%%%%%%%%%%%%%%%%%%%%%%%%%%%%%%%%%%%%%%%%%%%%%%%%%%%%%%%%%%%%%%%%%%%%%%%%%%%%%%%%%%%%%%%%%%%%%%%%%%%%%%%%%%%%%%%%%%%%%%%%%%
\section{Introduction}
%\begin{color}{red}
% NOTE. It is maybe possible to prove rapidly some easy topological results
%about Milnor-Hamm fibrations like  Milnor did for functions, also you did in the IMRN paper or something like that.
%\end{color}

 The existence of a local fibration structure defined by a map germ $G:(\bK^{m},0) \rightarrow (\bK^{p}, 0)$, $m\ge p >1$, $\bK = \bR$ or $\bC$, is fundamental for the study of the topological properties of  the map.  This has been first shown by Milnor \cite{Mi} in case of holomorphic functions $f:(\bC^{n},0) \rightarrow (\bC, 0)$, and had tremendous developments over several decades.\footnote{See also \cite{Le} for holomorphic function germs on a singular space germ $X$,  a different stream which we do not discuss here.} 
 
In \cite{Mi} it is proved that the Milnor fibration is independent of the local data.  More precisely, the local fibration existence statement includes the essential requirement that the locally trivial C$^\ity$-fibrations
\begin{equation}\label{eq:fib1}
   f_{|}: B^{2n}_{\e}\cap f^{-1}(D^{*}_{\eta}) \rightarrow D^{*}_{\eta}
\end{equation}
are independent, up to isotopy, of the small enough $\e \gg \eta >0$, where $B^{2n}_{\e}$ is the open ball at $0\in \bC^{n}$ of radius $\e$, and $D^{*}_{\eta}$ denotes the pointed disk of radius $\eta$ at $0\in \bC$.

   For holomorphic maps, Hamm \cite{Ha-icis} showed that if the map germ
 $G:(\bC^n,0)\to (\bC^k,0)$, $n>k>1$, is an \emph{isolated complete intersection singularity} (abbreviated ICIS\footnote{The ICIS condition amounts to the condition $\Sing G \cap G^{-1}(0) = \{0\}$. One of the richest sources of information on ICIS is  Looijenga's book \cite{Lo, Lo2} recently re-edited, and we refer to it for all the statements in the ICIS context.} in the following),  then $G$ defines a local fibration over the complement of its hypersurface discriminant:
 \begin{equation}\label{eq:fib2}
   G_{|}: B^{2n}_{\e}\cap G^{-1}(B^{2k}_{\eta}\m \Disc G) \rightarrow B^{2k}_{\eta}\m \Disc G
\end{equation}
  In particular this includes the independency of the choices of the small enough neighbourhoods, similar to Milnor's case evoked just before.
 
 In case of real map germs, as well as beyond the ICIS case in the holomorphic setting, 
  the existence of local fibrations is no more insured.  
Milnor  showed it in \cite[\S 11]{Mi} that analytic maps $G:(\bR^m,0)\to (\bR^p,0)$ define local tube fibrations in case they have isolated singularity $\Sing G = \{ 0\}$, but this is the only setting which needs no further conditions.\footnote{ 
In  \cite{Mi} Milnor also defines a local sphere fibration for holomorphic function germs,
and  subsequent study in the real setting has been concentrated to maps with isolated singularities (whenever such maps exist, see e.g. \cite{Lo0, AHSS}). We do not address this type of fibration here,  we only refer to \cite{ART2} for the newest developments.}
More recently there have been considered classes of singular maps $G$  for which  $\Disc G$ is the point $\{0\}$ only, thus zero dimensional,  and having non-isolated singularities in the fibre  over $\{0\}$,  cf   \cite{RSV,   dST0, Ma, dST1, ACT, DA, MS, PT} and many more others, but we do not intend to discuss more about this particular setting here. 

 Beyond the ICIS setting, the positive dimensional discriminant $\Disc G$ in local fibrations occurred before in some very special cases, e.g. \cite[Theorem 1.3]{ACT}, \cite[Prop. 2.1]{CGS}. 
 
 We address here  the possibility of defining \emph{locally trivial fibrations} in case of 
  real or complex map germs with positive dimensional discriminant $\Disc G$.
  In  this new setting the following problems have to be taken into account such that
the fibration problem can be well posed, and that the fibration becomes a local invariant:

 \medskip
\begin{enumerate}
  \item the local fibration must be independent of the small enough neighbourhood data, like in \eqref{eq:fib1} and \eqref{eq:fib2}. This does not come automatically for map germs outside the ICIS case;
  
\item   the image of the map germ $G$ may not be a neighbourhood of $0$ in $\bR^{p}$. Moreover, it may not be independent of the radius $\e$ of the ball $B^{m}_{\e}\subset \bR^{m}$, and thus the image of $G$ may not be well defined as a set germ in $(\bR^{p}, 0)$, see Definition \ref{d:nice};

 \item the discriminant of $G$ may not be well defined as a set germ. In case the image $G(\Sing G)$ of the singular locus is a set germ,  and when the image $\im G$ is a set germ too and has a boundary which contains the origin 0, then the discriminant $\Disc G$ should contain this boundary.

\end{enumerate}
 
%We exh natural  conditions under which all these obstructions can be treated.
%We find classes of real map germs endowed with a locally trivial fibration. %first over the complement of the discriminant and then by taking into account the fibres over the discriminant too. 
%The recent discovery that a class of real maps $\bC^{n}\to \bC$ called ``mixed functions'' \cite{Oka3}  may have,  under %special conditions (like strong Newton nondegeneracy, or polar and radial  homogeneity), some properties very close to %holomorphic (homogeneous) functions, and in the same time that other types of mixed functions are counterexamples to some %phenomena specific to the holomorphic functions (see e.g. \cite{Oka4, Ti4, ACT, Ha}),  gave us motivation to explore to %what extent such properties occur for real map germs $(\bR^{m},0) \rightarrow (\bR^{p}, 0)$. 
%The natural entry to this new framework is through the existence or non-existence of fibration structures in case the singular %values are not isolated anymore.
%

\medskip

Here we show how all the above issues can be treated in the same time. 

We consider first the existence of a local fibration outside the discriminant. %We start by defining the appropriate objects: Definitions \ref{d:nice}, \ref{d:disc}, \ref{d:Hamm}.
 Since we  need to rely on the image $\im G$ of  $G$ as a well-defined  set germ at $0$, as well as on the discriminant as 
 a well-defined germ of the image of the singular locus, we start in \S \ref{s:tame} the study of the delicate question whether the image of  an analytic map is well-defined as germ at the origin in the target\footnote{Recent progress \cite{JT} was made in classifying map germs with respect to this problem.}. We single out  classes of map germs $G$ such that $\im G$ and $\Disc G$ are well-defined as germs at $0$ and we call
them ``nice'' (Definition \ref {d:nice}). This category includes of course the complex ICIS.  
Within this framework,  we then give  an appropriate definition of the discriminant $\Disc G$ as the \emph{locus where the topology of the fibres may change}, Definition \ref{d:disc}.

%In order to take on board a positive dimensional discriminant we first  state in \S \ref{s:tame} an appropriate definition of $%\Disc G$ as the ``locus where the topology of the fibres may change''.   	
%We consider first the existence of a real fibration outside the discriminant.
%, which we call \emph{Milnor-Hamm fibration}. 
%To do that, we first need a convenient  definition of what we call discriminant.  
%In case the image of $G$ covers some neighbourhood of the origin $0\in \bR^{p}$, the complement of the discriminant   

%More generally, 
%all maps $G$ with target $\bR^{2}$ which are \emph{full} (i.e. that $G$ is an open map at $0$) are nice.

 %

 In \S \ref{s:milnorhamm} we prove the existence result, Lemma \ref{full}, under the most general conditions. The locally trivial fibration provided by this lemma 
 in case of positive dimensional $\Disc G$  will be called
 \emph{Milnor-Hamm fibration}\footnote{We prefer not to call this a ``tube fibration'' since a large part from $B^{p}_{\delta}$ may be missing. We reserve ``singular tube fibration'' for our second type of fibration presented in \S \ref{s:sing}.}   in reference to the fibration structure defined in the 1970's by Hamm \cite{Ha-icis} in case of an ICIS.
 
 Comparing our local invariant to the existing literature, let us remark that \cite[Prop. 2.1]{CGS} states a fibration result in case of a positive dimensional discriminant, and another definition of a fibration in case of a positive dimensional discriminant was given in the very beginning of \cite[Section 2]{MS}. However these differ radically from our approach to local fibrations since they do not address the  above displayed key issues (a), (b) and (c).
 
 \medskip 
 
 We next prove that a weaker Thom condition, the \emph{$\partial$-Thom regularity}\footnote{As introduced in the 1990's, see Definition \ref{defT} and the discussion before it.}, is sufficient for the existence of  the Milnor-Hamm fibration.   We find classes of singularities which satisfy our conditions, for instance certain maps of type $f\bar g$ (see Theorem \ref{t:mainfbarg}, Proposition \ref{p3}).  We construct in \S \ref{examples} examples   where the map germ $G$ has Milnor-Hamm fibration without being  $\partial$-Thom regular, which is another new feature.
 
 %Let us make a point here about the literature since this contains several false theorems or false proofs.  

 \smallskip
 
The second aim of this paper is to introduce  the \emph{singular Milnor tube fibration} (cf Definition \ref{d:tube1}) which includes the fibres over the  discriminant. We give in \S \ref{s:sing} an existence condition which is more general than the Thom regularity condition,  and we find examples of singular Milnor tube fibrations either having Thom regularity or not having it. The later possibility has been recently discovered and rises interesting questions, see e.g. \cite[\S 5.1]{PT}.

%We find classes of real map germs endowed with a locally trivial fibration.
The following very simple class shows several  aspects of the new fibrations that we introduce in this paper:

\begin{example}\label{ex:hh} 
Let $h:(\bC^n, 0) \to (\bC,0)$ be a holomorphic function germ. Consider the real valued function germ $H:(\bC^n, 0) \to (\bR,0)$ given by $H:=h\bar{h}$; this is clearly a nice map germ (Definition \ref {d:nice}). One has $\Sing H = V_H$ and one can show that the map $H$ is Thom regular at $V_H$.
%; this follows from the fact that the holomorphic function $h$ is Thom regular, which is a classical result (due to Hironaka, %and to Hamm and L\^{e}). 
By Proposition \ref{th:tube},  $H$ has a Milnor-Hamm fibration over $\bR\m \{0\}$ with two types of  fibres, one empty and one diffeomorphic to $B_{\e} \cap h^{-1}(S^{1}_{\eta})$, for some $0<\eta \ll \e$.

If we consider $H$ as a mixed map $f: \mathbb{C}^n \to \mathbb{C}$ with $f:=h\bar h$,   then $\Sing f = \mathbb{C}^n$ and $\Disc f=\im f = \bR_{\ge 0}$,  thus its Milnor-Hamm fibration over $\bC\m\Disc G$ has empty fibre.   Nevertheless,  its singular tube fibration (Definition \ref{d:tube1})  exists and has precisely the two types of  fibres of the map $H$.
 \end{example}
 
 As final remark, let us point out that with the same methods based on stratifications, particularly those in Section \ref{s:sing}, one can extend the framework to maps 
 $G:(X,0) \rightarrow (\bR^{p}, 0)$ where  $(X, 0) \subset (\bR^{N},0)$ is singular analytic. More generally,  one can work inside some o-minimal structure. We leave to the reader the technical details.

%%%%%%%%%%%%%%%%%%%
\section{Images, discriminants and nice map germs}\label{s:tame}

%Images and germs problem
While working with polynomial or  analytic map germs,  and  with semialgebraic or  subanalytic sets, respectively,  the first obstruction to define a fibration is that the image of a set germ at 0 in the source must be a set germ at 0 in the target, and moreover,  that the discriminant of the map should be well-defined as a set germ.  This holds for function germs and  also for the complex ICIS maps,  but it is far from being true in general, neither  in the real setting, nor in the complex one.  
Such a negative phenomenon can be observed even for very simple polynomial maps, as follows:
\begin{example}\label{ex:notnicebasic}
 Let  $G : \bR^{2} \to \bR^{2}$, $G(x, z) =  (x, xz)$.  For  the 2-disks $D_t := \{|x|<t, |z|< t\}$ as basis of open neighbourhoods of 0 for $t>0$,
we get that the image $A_t := G(D_t)$ is the full angle with vertex at 0, having the horizontal axis as bisector,  and of slope $<t$.  Since the relations defining  $A_t$ depend of $t$, it means that the image of $G$ is not well-defined as a germ.
A similar behaviour happens over $\bC$ instead of $\bR$.
\end{example}

Let $A, A'\s \bR^{p}$ be  subsets containing the origin and let $(A,0)$ and $(A', 0)$ denote their germs at 0. We recall that one has the equality of set germs $(A,0) = (A',0)$ if and only if there exists some open ball $B_{\e}\s \bR^{p}$ centred at 0 and of radius $\e>0$ such that $A\cap B_{\e} = A'\cap B_{\e}$.

\begin{definition}\label{d:nice}
Let $G:(\bR^{m},0) \rightarrow (\bR^{p}, 0)$, $m\ge p >0$, be a continuous map germ.
 We say that the image  $G(K)$  of a set $K\subset \bR^{m}$ containing $0$  is a \emph{well-defined set germ}
 at $0\in \bR^{p}$ if for any open balls $B_{\e},  B_{\e'}$ centred at 0, with  $\e, \e' >0$, we have the equality of germs
 $(G(B_{\e}\cap K), 0) =  (G(B_{\e'}\cap K),0)$.
 
Whenever the images $\im G$ and  $G(\Sing G)$ are well-defined as set germs at 0,  we say that $G$ is a \emph{nice map germ}.

\end{definition}

\begin{remark}
In support of the above definition, let us point out that even if the image $\im G$ of a map $G$ is well-defined as a germ, the restriction of $G$ to some subset 
might be not.  This behaviour can be seen in the following example related to the above one,
namely let   $G:\bC^3\to\bC^2$, $G(x,y,z)=(x,z)$ and  $K:= \{(x,y,z) \mid z=xy\}\s \bC^3$. Then the image $G(K)$ is not well-defined as a set germ.
\end{remark}

We have first the following general result:
\begin{lemma}\label{l:tame}
Let $G:(\bR^{m},0) \rightarrow (\bR^{p}, 0)$, $m\ge p >0$ be an analytic map germ. 
\begin{enumerate}
\rm \item \it
 If $\Sing G \cap G^{-1}(0) \subsetneq  G^{-1}(0)$ then  $\im G$  is well-defined as a set germ.
\rm \item \it If $\Sing G \cap G^{-1}(0) = \{ 0\}\subsetneq G^{-1}(0)$ then  $G$ is nice.
 \end{enumerate}
\end{lemma}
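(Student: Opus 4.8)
The plan is to deal with (a) first and observe that its hypothesis already forces $\im G$ to be a full neighbourhood of $0$, hence a well-defined germ for trivial reasons; then (b) follows from (a) as soon as I show in addition that $G(\Sing G)$ is a well-defined germ, and this I would get from a short compactness argument exploiting $\Sing G\cap G^{-1}(0)=\{0\}$.

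\emph{Part (a).} Reading $\Sing G\cap G^{-1}(0)\subsetneq G^{-1}(0)$ as a proper inclusion of germs at $0$ means exactly that every ball $B_{\e}$ contains a point $x\in G^{-1}(0)\m\Sing G$, i.e. a point of the zero fibre at which $dG_x$ has rank $p$ (here the hypothesis $m\ge p$ is used). At such an $x$ the map $G$ is a submersion on a neighbourhood of $x$, hence an open map there, so it carries some open set $U\s B_{\e}$ with $x\in U$ onto an open subset $G(U)\s\bR^{p}$; since $0=G(x)\in G(U)$, the set $G(B_{\e})\supseteq G(U)$ contains an open neighbourhood of $0$. As this holds for every $\e>0$, we get $(G(B_{\e}),0)=(\bR^{p},0)$ for all $\e$, so $\im G$ is well-defined as a set germ.

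\emph{Part (b).} The hypothesis of (b) contains that of (a), since $\{0\}=\Sing G\cap G^{-1}(0)\subsetneq G^{-1}(0)$; hence $\im G$ is well-defined by (a), and it remains to prove that $G(\Sing G)$ is well-defined. Since $G$ is analytic, $\Sing G$ is a closed analytic set, and by hypothesis we may fix $\e_{0}>0$ with $\Sing G\cap G^{-1}(0)\cap\overline{B_{\e_{0}}}=\{0\}$. For $0<\e'<\e\le\e_{0}$, the annular piece $K:=\Sing G\cap(\overline{B_{\e}}\m B_{\e'})$ is compact and, by the choice of $\e_{0}$, disjoint from $G^{-1}(0)$; therefore $G(K)$ is a compact subset of $\bR^{p}\m\{0\}$, so there is $\delta>0$ with $B_{\delta}\cap G(K)=\emptyset$. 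Now take $y\in G(\Sing G\cap B_{\e})$ with $\|y\|<\delta$ and write $y=G(x)$ with $x\in\Sing G\cap B_{\e}$: if $\|x\|\ge\e'$ then $x\in K$ and $y\in G(K)$, contradicting $\|y\|<\delta$, so in fact $x\in\Sing G\cap B_{\e'}$ and $y\in G(\Sing G\cap B_{\e'})$. Together with the trivial inclusion $G(\Sing G\cap B_{\e'})\s G(\Sing G\cap B_{\e})$ this gives $G(\Sing G\cap B_{\e})\cap B_{\delta}=G(\Sing G\cap B_{\e'})\cap B_{\delta}$, so $(G(\Sing G\cap B_{\e}),0)$ is independent of $\e\in(0,\e_{0}]$ and $G(\Sing G)$ is a well-defined set germ. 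Combined with the statement for $\im G$, this says $G$ is nice.

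The lemma is elementary; the one delicate point is the last step, because ``well-defined as a germ'' can genuinely fail for a \emph{restriction} $G|_{\Sing G}$ — exactly the phenomenon in the Remark after Definition~\ref{d:nice}, where singular points escape to the bounding sphere while their images stay near $0$. The hypothesis $\Sing G\cap G^{-1}(0)=\{0\}$ is precisely what prevents this, via the compactness of the annulus $K$; an equivalent route would be a {\L}ojasiewicz inequality $\|G(x)\|\ge c\|x\|^{N}$ on $\Sing G$ near the origin, which bounds how far a point with small image can lie from $0$.
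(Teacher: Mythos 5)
Your proof is correct and follows essentially the same route as the paper: part (a) via a submersion point of the zero fibre, giving $(\im G,0)=(\bR^{p},0)$, and part (b) by a compactness argument resting on $\Sing G\cap G^{-1}(0)=\{0\}$ near the origin. The only cosmetic difference is that you run the compactness step directly on the annulus $\Sing G\cap(\overline{B_{\e}}\m B_{\e'})$ to get a uniform $\delta$, whereas the paper argues by contradiction with a sequence $p_{n}\to 0$ and a convergent subsequence of preimages; the underlying idea is identical.
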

\begin{proof} 
(a). Let $q\in  G^{-1}(0) \m \Sing G \not= \emptyset$ by hypothesis.  Then $G$ is a submersion on some small open
neighbourhood $N_{q}$ of $q$,  thus the restriction $G_{| N_{q}}$ is an open map, and therefore $\im G$ contains some open neighbourhood of the origin of the target. This shows the equality of set germs $(\im G, 0)= (\bR^{p}, 0)$.

\noindent
(b). Let us show that $G(\Sing G)$ is well-defined as set germ at $0\in \bR^{p}$.
By contradiction, if this is not the case, then there exist $\e >\e' >0$ and a sequence of points $p_{n}\in \bR^{p}$,
	$p_{n} \to 0$, $p_{n}\in G(B_{\e}\cap \Sing G)$, $p_{n}\not\in G(B_{\e'}\cap \Sing G)$ for all integers $n\gg 1$. Let  then $x_{n}\in B_{\e}\cap \Sing G$ with $G(x_{n}) = p_{n}$ and there is a subsequence
	$(x_{n_{k}})_{k\in \bN}$ which tends to some point $x$ in the closure $\bar B_{\e}\cap \Sing G$.
	We have $G(x) = \lim_{k\to \infty} G(x_{nk}) = \lim_{k\to \infty}p_{n_{k}}= 0$. Since $\Sing G\cap G^{-1}(0) = 0$, 
	the point $x$ must be 0.  But then we must have $x_{n_{k}}\in B_{\e'}\cap \Sing G$ for some $k\gg 1$, which implies that $p_{n_{k}}\in G(B_{\e'}\cap \Sing G)$ which is a contradiction to the assumptions about the sequence $(p_{n})_{n}$.
	
	Since the hypothesis in (b) is more particular than that in (a), it then follows in addition that $G$ is nice.
\end{proof}

\begin{remark}\label{r:K-det}\label{c2}
It is known that for analytic map germs $G:(\bR^m,0) \to (\bR^p,0)$, $m>p$, the condition $\Sing G \cap V_G = \{0\}$ is equivalent to  the finite $C^0$-$\mathcal{K}$-determinacy of $G$, see \cite{Wa,CB} for details.  Observe also that the condition in Lemma \ref{l:tame}(a) cannot be weakened, like for instance in the example  $f(x,y,z)=(x^2+y^2, (x^2+y^2)z)$ taken from \cite{Ha}. 
\end{remark}

\smallskip
%A  non-constant polynomial germ $G:(\bR^{m},0) \rightarrow (\bR^{p}, 0)$, $m\ge p >0$,  
%has a well-defined singular set $\Sing G$. 

We still assume that $G:(\bR^{m},0) \rightarrow (\bR^{p}, 0)$, $m\ge p >0$  is an analytic map germ. Whenever $\im G$ is well-defined as a set germ,
its boundary $\partial \overline{\im G} := \overline{\im G} \m \i(\im G)$ is a closed subanalytic  proper subset of $\bR^{p}$, where $\i A := \mathring{A}$ denotes the $p$-dimensional interior of a subanalytic set $A\subset \bR^{p}$ (hence it is empty whenever $\dim A <p$), and $\overline{A}$ denotes the closure of it. We consider here  $\partial \overline{\im G}$ as a set germ at $0\in \bR^{p}$; this is of course empty if (and only if)  the equality $(\im G, 0) = (\bR^{p}, 0)$ holds.

\begin{definition}\label{d:disc}
We call \emph{discriminant of a nice map germ $G$} the following set:
\begin{equation}\label{eq:disc}
\Disc G := \overline{G(\Sing G)} \cup \partial \overline{\im G}
\end{equation}
which is a closed subanalytic germ of dimension strictly less than $p$. 

All the above subsets are considered as germs at the respective origins, and they are well-defined since $G$ is nice. 
In particular, the complementary of $\Disc G$ in a neighbourhood of the origin is the disjoint union of finitely many open connected subanalytic sets, well-defined as germs at the origin. 
 \end{definition}
 
 We shall denote by $\gcd(f,g) = 1$ the 
greatest common divisor of two holomorphic function germs $f,g : (\bC^{n}, 0)\to (\bC, 0)$, thus well-defined only modulo units in the algebra of such function germs.
 
 % nu stim daca (f,g) e nice dar stim din rationamentul de mai jos ca im(f,g) este nice.

\begin{theorem}\label{p:fbarg-tame}
Let $f, g: (\bC^{n}, 0)\to (\bC, 0)$, $n>1$,  be holomorphic germs such that $\gcd(f,g) = 1$. 
Then 
%the map germs $(f,g) : (\bC^{n}, 0)\to (\bC^{2}, 0)$  and   
$f\bar g : (\bC^{n}, 0)\to (\bC, 0)$ is a nice map germ.  
%\begin{color}{red}
% This result is actually needed later for proving Milnor-Hamm fibration for $f\bar g$.
%\end{color}
\end{theorem}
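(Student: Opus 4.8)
The plan is to verify the two conditions in Definition \ref{d:nice} separately: that $\operatorname{im}(f\bar g)$ is a well-defined set germ at $0$, and that $(f\bar g)(\Sing (f\bar g))$ is a well-defined set germ at $0$ (this is exactly what makes $\Disc(f\bar g)$ meaningful, cf. Definition \ref{d:disc}). The first is easy; the second carries the weight of the statement.

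\smallskip\noindent\emph{The image.} I would show the stronger fact $(\operatorname{im}(f\bar g),0)=(\mathbb C,0)$, which in particular forces $\partial\overline{\operatorname{im}(f\bar g)}=\emptyset$. Since $\gcd(f,g)=1$ and $n>1$, the hypersurface $V_f$ has pure dimension $n-1\ge 1$, while $V_f\cap V_g$ and the singular locus of the reduced space of $V_f$ have dimension $\le n-2$; hence smooth points of $V_f$ lying off $V_g$ are dense in $V_f$, so they accumulate to $0$. Pick such a point $q$ near $0$ and local holomorphic coordinates $(w_1,\dots ,w_n)$ at $q$ with $V_f=\{w_1=0\}$: then near $q$ one has $f=w_1^{a}u$ and $g=v$ with $u,v$ holomorphic units and $a\ge 1$ the multiplicity of the corresponding component. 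The restriction of $f\bar g$ to the coordinate line $\{w_2=\dots =w_n=0\}$ is a one‑variable mixed germ $w_1\mapsto w_1^{a}\eta(w_1)$ with $\eta$ continuous and non‑vanishing; on a small circle this is homotopic through non‑vanishing maps to $w_1\mapsto \eta(q)w_1^{a}$, which has winding number $a\neq 0$, so the image of a small disk under it is a neighbourhood of $0\in\mathbb C$. Letting $q\to 0$ this shows $(f\bar g)(B_\varepsilon)$ contains a neighbourhood of $0$ for every $\varepsilon>0$, whence $(\operatorname{im}(f\bar g),0)=(\mathbb C,0)$. Here $n>1$ is essential.

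\smallskip\noindent\emph{The discriminant.} I would first describe $\Sing(f\bar g)$. With $\nabla f,\nabla g$ the holomorphic gradients, $d(f\bar g)_z(v)=\overline{g(z)}\,(\nabla f(z)\!\cdot\!v)+f(z)\,\overline{\nabla g(z)\!\cdot\!v}$, and the standard rank computation for mixed maps gives: $z\in\Sing(f\bar g)$ iff $z\in V_f\cap V_g$, or $\operatorname{rank}_{\mathbb C}[\nabla f(z)\,;\,\nabla g(z)]\le 1$ together with $|g(z)|^{2}|\nabla f(z)|^{2}=|f(z)|^{2}|\nabla g(z)|^{2}$. Thus $\Sing(f\bar g)\subset (V_f\cup V_g)\cup W$, where $W:=\{z:\operatorname{rank}_{\mathbb C}[\nabla f(z)\,;\,\nabla g(z)]\le1\}$ is a proper complex‑analytic set, and $f\bar g\equiv 0$ on $V_f\cup V_g$; hence $(f\bar g)(\Sing(f\bar g))=\{0\}\cup(f\bar g)(S)$ with $S:=\{z\in W:\ f(z)g(z)\neq 0,\ |g|^{2}|\nabla f|^{2}=|f|^{2}|\nabla g|^{2}\}$, a subanalytic set whose closure in a small ball is compact and on which $f\bar g$ is proper. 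By Sard's theorem $(f\bar g)(\Sing(f\bar g))$ has measure zero, hence (being subanalytic) dimension $\le 1$, so near $0$ it is a finite union of analytic arcs issuing from $0$. What remains — and this is the main obstacle — is to see that this germ does not depend on the ball. A branch of $\overline{(f\bar g)(S)}$ at $0$ lifts, by the curve selection lemma, to an analytic arc $z(t)\to z_*\in\overline S\cap(f\bar g)^{-1}(0)$; a Łojasiewicz gradient inequality forces $z_*\in V_f\cap V_g$ (if, say, $z_*\in V_f\setminus V_g$ then near $z_*$ one has $|g|^{2}|\nabla f|^{2}\gg |f|^{2}|\nabla g|^{2}$, since $|\nabla f|\ge c|f|^{\theta}$ with $\theta<1$ when $\nabla f(z_*)=0$ and $|\nabla f|$ is bounded below otherwise, so $z_*\notin\overline S$). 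Since $f\bar g\equiv 0$ along $V_f\cap V_g$, since $\dim (V_f\cap V_g)\le n-2$ by coprimality, and since the germ of $V_f\cap V_g$ at $0$ is conically connected to $0$, I would then analyse $f\bar g$ near points of $V_f\cap V_g$ via a Whitney stratification of $(\mathbb C^n,V_f\cup V_g)$: at a stratum point where $V_f$ and $V_g$ are smooth and transverse, $f\bar g$ is, in local holomorphic coordinates, $w_1^{a}\bar w_2^{\,b}$ times a unit, whose singular set lies in the zero fibre, so such points are not in $\overline S$; along the lower strata of $V_f\cap V_g$ the map together with the stratified data is locally trivial by Thom's isotopy lemma, so every branch of $\overline{(f\bar g)(S)}$ at $0$ coming from a point $z_*\neq 0$ coincides with one already produced arbitrarily near $0$ (an induction on the dimension of the stratum concludes this). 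Feeding this into the compactness argument of the proof of Lemma \ref{l:tame}(b) then yields that $(f\bar g)(\Sing (f\bar g))$ is a well-defined germ, so $f\bar g$ is nice. The hardest point is this stratified triviality analysis along $V_f\cap V_g$, which rules out ``moving'' branches of the discriminant that are invisible from the origin.
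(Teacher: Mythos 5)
Your image argument is correct, and it is a genuinely different (and perfectly valid) route from the paper's: the paper works with the pencil $A_\alpha=\{f=\alpha g\}$, shows $g_{|A_\alpha}\not\equiv 0$ by coprimality, and uses a semicontinuity-plus-compactness argument over $\alpha\in S^1$, whereas you get $(\im f\bar g,0)=(\bC,0)$ by a winding-number/degree argument at smooth points of $V_f\setminus V_g$ accumulating at $0$. Likewise, your Łojasiewicz gradient inequality step, forcing the accumulation points of $S$ on $V_{f\bar g}$ to lie in $V_f\cap V_g$, is sound.

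The gap is in the decisive last step for the discriminant: the claim that ``along the lower strata of $V_f\cap V_g$ the map together with the stratified data is locally trivial by Thom's isotopy lemma.'' The first isotopy lemma trivializes a Whitney-stratified \emph{set}, not the map $f\bar g$; to trivialize $f\bar g$ along strata of $V_f\cap V_g$ in a value-preserving way -- which is what you need in order to conclude that a branch of $\overline{(f\bar g)(S)}$ produced near a point $z_*\neq 0$ is literally the same subset of $\bC$ as one produced near points of the stratum arbitrarily close to $0$ -- you need a Thom $(a_{f\bar g})$-regular stratification along those strata (second isotopy lemma). Whitney regularity of $(\bC^n,V_f\cup V_g)$ does not provide this, and the existence of Thom regularity for maps $f\bar g$ is exactly the delicate point recalled in Remark \ref{r:pichonseademistake}: it fails in general (Parusinski's counterexample), so this step cannot be invoked as a known lemma and, as stated, your proof would at best cover a restricted class -- whereas the theorem assumes only $\gcd(f,g)=1$. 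Even where some triviality is available, the induction over strata is only sketched: trivializing neighbourhoods shrink near lower-dimensional strata, and you must show the contribution to the image stabilizes as a \emph{set germ} at $0$, not merely up to homeomorphism. The paper avoids all of this by factoring $f\bar g=(u\bar v)\circ(f,g)$: $\Disc(f,g)$ is a well-defined complex curve germ, and $\Disc f\bar g$ is then controlled through the one-variable maps $u\bar v_{|C}\circ p_C$ on Puiseux parametrizations of its components, whose critical loci and images are finite unions of real (semi)analytic arc germs, hence well defined at $0$. Your argument needs a replacement of comparable strength for the stratified-triviality claim before it can be considered a proof.
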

\begin{proof} 
%If  $f = cg$ for some constant $c\in \bC$ then $\im f\bar g$ is a radius emerging from 0, of fixed slope, hence a germ.
By our assumption we have $\ord_{0} f >0$ and $\ord_{0} g>0$.  Let us consider 
 some  representatives $f,g : B_{\e} \to \bC$ of the holomorphic germs, well-defined on some small ball $B_{\e}$. 
For any $\alpha \in S^{1}\subset \bC$
 the set $A_{\alpha} := \{ f - \alpha g =0\}\subset B_{\e}$ is a complex hypersurface germ at the origin $0\in \bC^{n}$,
 hence of dimension $n-1$. The image of the restriction of $f\bar g$ to $A_{\alpha}$ is $\im \alpha |g_{|A_{\alpha}}|^{2}$,
% , and the image of the restriction of $(f,g)$ to $A_{\alpha}$ is $\im (\alpha g_{|A_{\alpha}}, g_{|A_{\alpha}})$.
 and we claim that the restriction $g_{|A_{\alpha}}$ is not constant $=0$.
 
If not the case, then $g_{| A_{\alpha}} \equiv 0$ implies that there is some holomorphic germ $v$ of positive order such that $v$ divides both $g$ and $f - \alpha g =0$, so that its zero locus contains $A_{\alpha}$. This further implies that $v$ divides $f$, contradicting the irreducibility of the fraction $f/g$.
 
 We have thus shown that $\im g_{| A_{\alpha}}$ contains some small open disk $D_{r(\alpha)}$ centred at 0 of positive radius $r(\alpha)$, for any $\alpha \in S^{1}$.  The radius $r(\alpha)$ is  a lower semi-continuos function of $\alpha$ in the following sense:
 let $\alpha\in S^{1}$ be fixed;  for any $0< d < r(\alpha)$, there is a small neighbourhood $N_{\alpha}\s S^{1}$ of $\alpha$ 
 such that if $\alpha' \in N_{\alpha}$ then $\im g_{| A_{\alpha'}}$ contains some open disc $D_{d}$ centred at 0 of radius $d$.
 Indeed, for  $d<d'<r(\alpha)$, there exists some point $x_{\alpha}\in B_{\e}\cap (g_{| A_{\alpha}})^{-1}(\partial \bar D_{d'})$. Then the continuous function $g$ maps some small enough neighbourhood $M_{x_{\alpha}}\s B_{\e}$ of $x_{\alpha}$ to the complement of the disk $D_{d}$. On the other hand,  $M_{x_{\alpha}}$ is intersected by all hypersurfaces $A_{\alpha'}$
 for $\alpha'$ in some small enough neighbourhood of $\alpha$.
 
  Now since $S^{1}$ is compact, it  follows that there is $\rho>0$  such that $r(\alpha) > \rho$ for any $\alpha \in S^{1}$. 
 Altogether  this shows  that $\im (f\bar g)_{|B}$
 contains an open neighbourhood of $0\in \bC$, and this implies that the image of $f\bar g$ is well-defined as germ.

Let us show that the discriminant $\Disc f\bar g$ is also well defined as a set germ at $0$.
 First we remark that the discriminant $\Disc (f,g)$ of the map $(f,g)$ is well defined as a set germ at the origin of $\bC^{2}$, namely it is either $0$ or a the germ of a complex curve which may have several irreducible components. This follows from the fact that the restriction of $(f,g)$ to $\Sing (f,g)$ is either constant or locally a complex curve (i.e. at points where the rank of this restriction is 1).
 
Next, by considering the decomposition of the map $f\bar g$ into $(f,g)$ followed by $u\bar v: \bC^{2} \to \bC$ (see \cite{PT}),
one finds that the discriminant of   $f\bar g$  is the image by $u\bar v$ of the critical locus of all the restrictions
$u\bar v_{|C}: (C,0) \to (\bC, 0)$ to irreducible components $(C,0)\subset \Disc (f,g)$. By considering the Puiseux parametrisation $p_{C}: (\bC,0) \to (C,0)$, the  critical locus of $u\bar v_{|C} \circ p_{C}$ is defined as the solution of real analytic equations, and therefore it is a finite collection of real analytic curves. The images by $u\bar v_{|C} \circ p_{C}$
of these real analytic curves is a finite union of real semi-analytic curves (by \L ojasiewicz' result in dimension 1) which are obviously well defined as set germs at 0. It then follows that $\Disc f\bar g$ is well defined as a set germ, and it is either $0$ or a finite union of real semi-analytic curve germs.

This completes the proof  that $f\bar g$ is a nice map germ.  
\end{proof}

%%%%%%%%%%%%%%%%%%%
\section{Milnor-Hamm fibration}\label{s:milnorhamm}

We still work with a non-constant nice analytic map germ $G:(\bR^{m},0) \rightarrow (\bR^{p}, 0)$, $m\ge p>0$.
We denote $V_{G} := G^{-1}(0)$ and $B^{k}_\eta$ signifies an open ball of radius $\eta$ centred at $0\in \bR^{k}$, for some positive integer $k$.

After \eqref{eq:disc}, the open subset $B^{p}_\eta \m \Disc G$ is the disjoint union $\sqcup_{i}\cC_{i}$ of finitely many open subanalytic sets considered as germs at the origin, and we have by definition:
\begin{equation}\label{eq:cond}
  \im G \cap \cC_{i} \not= \emptyset \  \Leftrightarrow \ \im G \supset \cC_{i}.
\end{equation}
%in other words the image of $G$ is a union of cells $ \cC_{i}$.

We would like to have 
 a locally trivial C$^{\ity}$-fibration with well-defined fibre over each such component $\cC_{i}$.  
%\begin{color}{red}We therefore say that there is  is a \emph{locally trivial fibration}...  
%if for any connected component of $B^{p}_\eta \m \Disc G$ \end{color}

\begin{definition}\label{d:Hamm}
Let $G:(\bR^{m},0) \rightarrow (\bR^{p}, 0)$, $m\ge p>0$,  be a non-constant nice analytic map germ.
We say that $G$ has {\em  Milnor-Hamm fibration} 
if, for any $\e > 0$ small enough, there exists  $0<\eta \ll \e$ such that the restriction:
\begin{equation}\label{eq:tube}
 G_| :  B^{m}_{\e} \cap G^{-1}( B^{p}_\eta \m \Disc G) \to  B^{p}_\eta \m \Disc G
\end{equation}
 is a locally trivial fibration over each connected component $\cC_{i}\subset B^{p}_\eta \m \Disc G$, such that it is independent of the choices of $\e$ and $\eta$ up to diffeomorphisms. 
 \end{definition}

It follows that, if it exists,  this fibration has nonempty fibres only if $\dim \im G =p$. The case $\dim \im G <p$ will be included in a more general treatment in \S \ref{s:sing}.   %This is insured in particular if $G$ is surjective (in the sense that $\im G$ contains some small ball at the origin) but it is not %necessary, see Example \ref{ex:hh}.  

% In order to treat such cases, and the much more general setting including the fibres over the discriminant, we introduce the %definition of a stratified tube fibration in \S....  \end{color}

%Similar types of fibrations but with the stronger assumptions of Thom regularity  have been considered  in  \cite{CSG, MS}.
%The Thom regularity of the map $G$  may fail whereas the Milnor-Hamm fibration still exists. We shall give examples here. %The first examples of fibrations without Thom regularity were built in \cite{PT} in case $\Disc G = \{ 0\}$.

Let $U \subset \bR^m$ be an open set such that $0\in U$,  and let $\rho:U \to \bR_{\ge 0}$ be a non-negative proper function which defines the origin, for instance the Euclidean distance. 
	The transversality of the fibres of a map $G$ to the levels of $\rho$ is called \textit{$\rho$-regularity} and it is a sufficient condition for the existence of  locally trivial fibrations. It was used in the local (stratified) setting by Thom, Milnor, Mather, Looijenga, Bekka, e.g.  \cite{Th1,Th2,Mi,Lo,Be}  and more recently \cite{ACT, dST1}, as well as at infinity, e.g. \cite{NZ,Ti2,Ti3,ACT-inf,DRT}, but also in many other recent papers and under different names.  
We consider the following definition:	

\begin{definition}\label{d:M}
	Let 	$G:(\bR^m, 0) \to (\bR^p,0)$ be a non-constant analytic map germ, $m\ge p >0$. We call:
	\[M(G):=\left\lbrace x \in U \mid \rho \not\pitchfork_x G \right\rbrace  \]
	the set of \textit{$\rho$-nonregular points} of $G$, or \emph{the Milnor set of $G$},
	where  $\rho$ denotes here the Euclidean distance function.
\end{definition}
 
%We shall work with the Euclidean distance function $\rho_{E}$, and we write $M(G):= M_{\rho_{E}}(G)$ for short,  but all %results carry out easily over any other function $\rho$ as defined above.
%$B_{\e}^{m}$ and $S^{m-1}_{\e}$, respectively, its open balls and the spheres or radius $\e$ centered at origin. 

We then consider the following condition:
\begin{equation}\label{eq:main}
 \overline{M(G)\m G^{-1}(\Disc G)}\cap V_G \subseteq \{0\}
\end{equation}
where the analytic closure of the subanalytic set $M(G)\m G^{-1}(\Disc G)$ is considered as a set germ at the origin.
Condition  \eqref{eq:main} is a direct extension of the condition used in  \cite{dST0, dST1, Ma, ACT} for the case $\Disc G =  \{0\}$,
where it was shown that it
insures the existence of a locally trivial  fibration  called \emph{Milnor tube fibration}\footnote{We send to \S \ref{s:sing} for the extension of this fibration.}. The basic result on the existence of Milnor-Hamm fibrations is the following:

%%%%%
\begin{lemma} \label{full}
	Let $G:(\bR^m, 0) \to (\bR^p,0)$ be a non-constant nice analytic map germ, $m> p >0$.  If $G$ satisfies condition \eqref{eq:main}, then $G$ has a Milnor-Hamm fibration \eqref{eq:tube}.
\end{lemma}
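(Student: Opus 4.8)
The strategy is the classical Milnor--Hamm scheme adapted to the stratified target. Fix a small $\e_0>0$ for which the representatives of $G$, $\Disc G$, $\im G$ and the finitely many components $\cC_i$ of $B^p_{\eta}\m\Disc G$ are all well-defined as germs; this is where the niceness hypothesis is used. The main point is to construct, for each small $\e\le\e_0$, a radius $\eta(\e)>0$ such that over the open set $B^p_{\eta}\m\Disc G$ the restriction \eqref{eq:tube} is a submersion onto its image whose fibres meet the sphere $S^{m-1}_{\e}$ transversally, so that Ehresmann's fibration theorem applies on each component $\cC_i$, and then to show that shrinking $\e$ produces diffeomorphic fibrations.

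\textbf{Step 1: the Milnor radius.} Condition \eqref{eq:main} says that the analytic closure of $M(G)\m G^{-1}(\Disc G)$ meets $V_G$ only at $0$. By the curve selection lemma (in the subanalytic category) this translates into a \L ojasiewicz-type statement: there is $\e_1\le\e_0$ such that for every $x$ with $0<\|x\|\le\e_1$ and $G(x)\notin\Disc G$ one has $x\notin M(G)$, i.e. $\rho\pitchfork_x G$. Equivalently, on the punctured ball the levels of $\rho$ are transverse to the fibres of $G$ away from $G^{-1}(\Disc G)$. One should record this uniformly: for each $\e<\e_1$ the sphere $S^{m-1}_{\e}$ is transverse to every fibre $G^{-1}(y)$ with $y\in B^p_{\eta}\m\Disc G$, because $M(G)$ being closed (as an analytic set) and disjoint from $S^{m-1}_{\e}\m G^{-1}(\Disc G)$ gives a positive distance, hence a radius $\eta=\eta(\e)>0$ with $S^{m-1}_{\e}\cap G^{-1}(B^p_{\eta}\m\Disc G)\cap M(G)=\emptyset$. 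A separate, easier continuity/compactness argument guarantees that $\eta(\e)$ can also be chosen so that $G$ is a submersion on $B^m_{\e}\cap G^{-1}(\cC_i)$ whenever $\cC_i\subset\im G$, using $\overline{G(\Sing G)}\subset\Disc G$.

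\textbf{Step 2: local triviality over each $\cC_i$.} Fix $\e<\e_1$ and $\eta=\eta(\e)$. On $W:=B^m_{\e}\cap G^{-1}(B^p_{\eta}\m\Disc G)$ the map $G_|$ is a submersion, its fibres are transverse to $\partial B^m_{\e}=S^{m-1}_{\e}$ by Step 1, and $W$ is a manifold with corners whose boundary strata (the interior of $W$ and $W\cap S^{m-1}_{\e}$) are both mapped submersively to the target. By the Ehresmann theorem for manifolds with boundary/corners (or equivalently by integrating a lift of a local frame, controlled near $S^{m-1}_{\e}$ by the transversality), $G_|$ restricted to $G^{-1}(\cC_i)$ is a locally trivial $C^{\infty}$-fibration over each connected component $\cC_i$; if $\cC_i\not\subset\im G$ the fibre is empty by \eqref{eq:cond} and there is nothing to prove.

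\textbf{Step 3: independence of $\e,\eta$.} For $0<\e'<\e<\e_1$ and suitable $\eta',\eta$, consider the "shell" $B^m_{\e}\m B^m_{\e'}$ intersected with $G^{-1}(\cC_i)$; by Step 1 every $\rho$-level in this shell is transverse to the fibres over $\cC_i$, so integrating the gradient-like vector field of $\rho$ (lifted to be tangent to the fibres of $G$) yields a diffeomorphism between the fibration over $\cC_i$ at radius $\e$ and the one at radius $\e'$, after possibly shrinking $\eta$; the standard "conical structure'' argument makes this compatible with the choices of $\eta$. This gives the asserted independence up to diffeomorphism and completes the proof.

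\textbf{Main obstacle.} The delicate point is Step 1, namely turning the closure condition \eqref{eq:main} into a genuinely \emph{uniform} transversality statement in a neighbourhood of $0$, rather than just a pointwise one: one must rule out sequences $x_n\to 0$, $x_n\in M(G)$, $G(x_n)\notin\Disc G$ with $G(x_n)\to 0$, and this is exactly the curve selection / \L ojasiewicz input, which requires working carefully in the subanalytic category and keeping track of the fact that $G^{-1}(\Disc G)$ need not be a germ-complement of a nice open set. Everything else (Ehresmann, conical structure) is standard once this uniform $\rho$-regularity over $B^p_{\eta}\m\Disc G$ is in place.
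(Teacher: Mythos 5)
Your overall scheme is the same as the paper's (transversality of the Milnor sphere to the fibres over $B^{p}_{\eta}\m\Disc G$, Ehresmann for a proper submersion of a manifold with boundary, and a flow along $\rho$ through a shell for the independence of $\e$ and $\eta$), but the key deduction in your Step 1 is false. Condition \eqref{eq:main} only says that the closure of $M(G)\m G^{-1}(\Disc G)$ meets $V_G$ in at most the origin; it does \emph{not} imply the existence of $\e_1$ such that every $x$ with $0<\|x\|\le\e_1$ and $G(x)\notin\Disc G$ lies outside $M(G)$, i.e.\ it does not clear the punctured ball of $\rho$-nonregular points away from $G^{-1}(\Disc G)$. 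The paper's Remark \ref{r:notbad} is a counterexample to your claim: $G=(x^{2}+y^{2},z^{2}+w^{2})$ satisfies \eqref{eq:main} (here $V_G=\{0\}$) and has a Milnor-Hamm fibration, yet $(M(G),0)=(\bR^{4},0)$, so there are points of $M(G)\m G^{-1}(\Disc G)$ arbitrarily close to $0$; no curve selection argument can yield what you assert. This also undermines the first sentence of your Step 3, where "by Step 1 every $\rho$-level in this shell is transverse to the fibres over $\cC_i$" is claimed before any shrinking of $\eta$.

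What is true, and is all you need, is the sphere-level (and shell-level) statement: for a fixed $\e$, respectively a fixed compact range $[\e',\e]$ of radii, the compact set $S^{m-1}_{\e}\cap\overline{M(G)\m G^{-1}(\Disc G)}$, respectively its shell analogue, is disjoint from $V_G$ by \eqref{eq:main}, hence $\|G\|$ is bounded below by a positive constant on it; choosing $\eta=\eta(\e)$, respectively $\eta'=\eta'(\e',\e)$, below that bound forces $S^{m-1}_{\e}\cap G^{-1}(B^{p}_{\eta}\m\Disc G)\cap M(G)=\emptyset$, and likewise in the shell. This compactness/sequence argument is exactly how the paper passes from \eqref{eq:main} to the submersivity of \eqref{rest1} and of $G_{|}$ on $\overline{B^{m}_{\e}}\cap G^{-1}(B^{p}_{\eta}\m\Disc G)$ as a proper map of a manifold with boundary, before invoking Ehresmann. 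So replace your pointwise claim by this argument, and in Step 3 perform the shrinking of $\eta$ (depending on $\e'$) \emph{before} asserting transversality to the intermediate spheres; with that repair your Steps 2 and 3 go through and essentially coincide with the paper's proof. (Also note that submersivity in the interior needs no compactness at all: $G(x)\notin\Disc G$ already forces $x\notin\Sing G$ because $\overline{G(\Sing G)}\subset\Disc G$.)
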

\begin{proof} 
 The map $G$ satisfies condition \eqref{eq:main}
if and only if there exists $\e_0>0$ such that, for any  $0<\e <\e_0$, there exists $\eta$, $0<\eta \ll \e$, such that the restriction map
\begin{equation}\label{rest1}
G_{|}: S_{\e}^{m-1}\cap G^{-1}(B^{p}_{\eta} \m \Disc G) \to B^{p}_{\eta} \m \Disc G
\end{equation}
is a smooth submersion and moreover that the map
$G_{|}:\overline{B^m_\e} \cap G^{-1}(B^{p}_\eta \m \Disc G) \to B^{p}_\eta \m \Disc G$
	is a submersion on a manifold with boundary, and  it is proper. We may therefore apply Ehresmann's Theorem (see for instance \cite{JM}) to conclude to the existence of a locally trivial fibration \eqref{eq:tube} over every connected component of $B^{p}_{\eta} \m \Disc G$ (with empty or non-empty fibre). 	
%	Let us show that condition \eqref{eq:cond} is also satisfied. Indeed, our submersion \eqref{rest1} is an open map, and %since the source  is relatively compact, the image of $G_{|}$ is relatively closed.  
\end{proof}

\begin{remark}\label{r:notbad}
A map germ $G$ such that  $(M(G), 0)= (\bR^{m}, 0)$ can still have a  Milnor-Hamm fibration, here is an example:
 $G := (x^{2}+ y^{2}, z^{2}+ w^{2}) : \bR^{4}\to \bR^{2}$. 
The  map $G$ is nice, with
	$\im G = \bR_{\ge 0} \times \bR_{\ge 0}$, $\Disc G = \bR_{\ge 0}\times \{0\} \cup \{0\} \times \bR_{\ge 0}$.
We have $V_{G} =\{0\}$ and  $(M(G),0)= (\bR^{4},0)$, compare to Theorem \ref{c4} below.

Nevertheless, $G$ satisfies condition \eqref{eq:main} and  the Milnor-Hamm fibration exists. Its fibre over the  positive quadrant is compact, diffeomorphic to a torus $S^{1}\times S^{1}$ which does not intersect the Milnor-Hamm sphere $S_{\e}^{m-1}$ in \eqref{rest1}. 
\end{remark}

The next statement represents a real counterpart of Hamm's result \cite{Ha-icis} that a holomorphic map which defines an ICIS has a locally trivial fibration over the complement of the discriminant:
%%%%%%%
\begin{theorem}\label{c4}
		Let $G:(\bR^m, 0) \to (\bR^p,0)$ be a analytic map germ. If  $V_G  \cap \Sing G  = \{0\}$ and  $\dim V_{G}>0$,		 then $G$ is nice and has a Milnor-Hamm fibration \eqref{eq:tube}. 
 
\end{theorem}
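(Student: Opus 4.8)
The plan is to reduce Theorem \ref{c4} to Lemma \ref{full} by verifying its two hypotheses: that $G$ is nice, and that $G$ satisfies condition \eqref{eq:main}. The first is immediate from Lemma \ref{l:tame}(b): the assumption $V_G \cap \Sing G = \{0\}$ together with $\dim V_G > 0$ gives exactly $\Sing G \cap G^{-1}(0) = \{0\} \subsetneq G^{-1}(0)$, so $G$ is nice and $\Disc G$ is well-defined as a set germ. Thus everything hinges on establishing \eqref{eq:main}, namely $\overline{M(G)\m G^{-1}(\Disc G)}\cap V_G \subseteq \{0\}$.

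For the core estimate I would argue by contradiction in the usual curve-selection style. Suppose there is a point $x \in V_G \m \{0\}$ lying in the closure of $M(G)\m G^{-1}(\Disc G)$. By the curve selection lemma (valid in the subanalytic setting) choose a real analytic path $\gamma(t)$, $t\in[0,\varepsilon)$, with $\gamma(0)=x$ and $\gamma(t)\in M(G)\m G^{-1}(\Disc G)$ for $t>0$. Since $x \in V_G$ but $x \neq 0$, and since $V_G \cap \Sing G = \{0\}$, the map $G$ is a submersion at $x$; hence $G$ is a submersion on a whole neighbourhood $N_x$ of $x$. On $N_x$ the fibres of $G$ are smooth submanifolds of constant dimension $m-p$ depending real-analytically on the base point. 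The point $x\neq 0$ is a regular point of the Euclidean distance function $\rho$ restricted to the fibre $G^{-1}(0)$ near $x$ — more precisely, I would show that near such a point the level sets of $\rho$ are transverse to the fibres of $G$. Indeed, $\rho \not\pitchfork_x G$ would force the gradient of $\rho$ at $x$ to be a linear combination of the gradients of the components of $G$ at $x$; this is the statement $x\in M(G)$. So I must rule this out for points $x$ near a given $x_0\in V_G\m\{0\}$, at least on the part of $M(G)$ away from $G^{-1}(\Disc G)$.

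The main obstacle — and the step needing the most care — is precisely this: showing that $M(G)$ stays away from $V_G\m\{0\}$, or at least that the part of $M(G)$ accumulating onto such points must sit inside $G^{-1}(\Disc G)$. The natural tool is the Milnor–Hamm inequality along the analytic path: if $\gamma(t)\to x_0\in V_G$, then $\langle \grad(\rho\circ\gamma)(t), \gamma'(t)\rangle = \frac{d}{dt}\tfrac12|\gamma(t)|^2$, and the $\rho$-nonregularity means $\grad\rho$ at $\gamma(t)$ lies in the span of $\grad G_1,\dots,\grad G_p$ at $\gamma(t)$. Writing $G\circ\gamma(t)$ as a convergent vector Puiseux series and differentiating, one obtains a \L ojasiewicz-type comparison between $\tfrac{d}{dt}|G\circ\gamma(t)|$ and $\tfrac{d}{dt}|\gamma(t)|$ which, combined with $|G\circ\gamma(t)|\to 0$ and $|\gamma(t)|\to|x_0|>0$, yields a contradiction unless $G\circ\gamma(t)$ is eventually a critical value, i.e. $\gamma(t)\in G^{-1}(\Disc G)$ for small $t>0$. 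I would model this argument on the proof of the analogous statement for the case $\Disc G=\{0\}$ in \cite{dST1, Ma, ACT}, the only genuine difference here being that the accumulation point $x_0$ is a regular point of $G$ (by the ICIS-type hypothesis) rather than a singular one, which in fact makes the transversality argument cleaner: near $x_0$ the map $G$ has a local product structure, $\rho$ restricted to each nearby fibre has $x_0$-close critical points only if its gradient is $G$-horizontal, and a direct computation using that $|G|$ is strictly increasing along $\rho$-level-exiting directions near a regular non-critical value finishes it. Once \eqref{eq:main} is in hand, Lemma \ref{full} delivers the Milnor-Hamm fibration \eqref{eq:tube} and the proof is complete.
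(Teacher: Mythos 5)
Your reduction to Lemma \ref{full} and the niceness step via Lemma \ref{l:tame}(b) are exactly what the paper does. The gap is in the verification of condition \eqref{eq:main}. You set up the contradiction at a \emph{fixed} point $x\in V_G\setminus\{0\}$ lying in $\overline{M(G)\setminus G^{-1}(\Disc G)}$ and then try to contradict this using only the data ``$|G\circ\gamma(t)|\to 0$, $|\gamma(t)|\to |x|>0$, $\gamma(t)$ is $\rho$-nonregular''. But no contradiction can follow from these data alone, because such points $x$ genuinely exist once one leaves a small neighbourhood of the origin: take for instance $G(x,y)=x^2+y^2-2y$ on $\bR^2$, whose zero set is the circle of radius $1$ centred at $(0,1)$; the point $(0,2)\in V_G\setminus\{0\}$ is a tangency point of $V_G$ with the sphere $S_2$, and the tangency points of the nearby fibres with the spheres through them form a curve in $M(G)\setminus G^{-1}(\Disc G)$ accumulating on $(0,2)$, with $|G\circ\gamma|\to 0$ and $|\gamma|\to 2>0$. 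Condition \eqref{eq:main} is a \emph{germ} condition at $0$, and any proof must use that the offending points are arbitrarily close to the origin; your curve selection is applied at $x$ rather than at $0$, and the proposed ``\L ojasiewicz-type comparison'' never invokes smallness of $|x|$, so the claimed contradiction is not there. (Also, the escape clause ``unless $\gamma(t)\in G^{-1}(\Disc G)$'' is vacuous, since the path was chosen in the complement of $G^{-1}(\Disc G)$ to begin with.)

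The missing ingredient is the classical Milnor-type fact obtained by curve selection \emph{at the origin}: since $\Sing G\cap V_G=\{0\}$, the set $V_G\setminus\{0\}$ is a smooth manifold near $0$, and the critical points of $\rho_{|V_G\setminus\{0\}}$ cannot accumulate at $0$; hence for $\e$ small every sphere $S^{m-1}_{\e}$ is transverse to $V_G\setminus\{0\}$. Once this is in hand, no \L ojasiewicz inequality is needed: transversality is open, $S^{m-1}_{\e}\cap V_G$ is compact and $G$ is a submersion near it, so $S^{m-1}_{\e}$ is transverse to all fibres $G^{-1}(y)$ in a neighbourhood of $S^{m-1}_{\e}\cap V_G$, and choosing $\eta\ll\e$ so that $S^{m-1}_{\e}\cap G^{-1}(B^p_\eta)$ lies in that neighbourhood gives the submersion \eqref{fibonSp}; this is precisely the paper's (much shorter) argument, and it yields the equivalent form of \eqref{eq:main} used in the proof of Lemma \ref{full}. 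So the overall strategy is right, but the one step you flag as ``needing the most care'' is exactly the step that is wrong as sketched; it should be replaced by the sphere-transversality-to-$V_G$ argument plus openness of transversality.
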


\begin{proof}
	Since  $V_G  \cap \Sing G  = \{0\}$ and $\dim V_{G}>0$, we may apply Lemma \ref{l:tame}(b)
	and conclude that $G$ is nice.
		
		%Maybe that $V_{G}= \{0\}$ is equivalent to the fact that all the fibres of 
% $G$ are compact, so that  they tend to the origin.  See Example Red where it is like that. 
%That ``$V_{G}= \{0\}$ implies all fibres compact'' seems to me clear,
%but the converse is not so clear, maybe not true.
	
Next, since transversality is an open condition, it follows that for each $\e>0$ one can find $0<\eta_{0} \ll \e$, such that for any $0<\eta \le \eta_{0}$ the restriction map 
\begin{equation}\label{fibonSp}
		G_{|}:S^{m-1}_{\e} \cap G^{-1}(B_{\eta}^{p})\to B_{\eta}^{p}
		\end{equation}
 is a smooth submersion and therefore Lemma \ref{full} applies. 	
 \end{proof}
 
%\begin{remark}
% In the hypotheses of Theorem \ref{c4}, the tube fibration structure provided by the Milnor-Hamm fibration \eqref{eq:tube} %induces a local open book decomposition in the neighbourhood of  the binding $K_\e := V_{G}\cap B^{m}_{\e}$,
% i.e. $S^{m-1}_{\e} \cap G^{-1}(B_{\eta}^{p})\approx K_\e \times B^{p}_{\eta}$, see \cite[Theorem 1.3]{ACT},  also
 %\cite{dST0},  or \cite[Theorem 2.2]{dST1}.
%\end{remark}

%%%%%%

\smallskip

\begin{example}  %this is surjective
	The map $G: \bR^n \to \bR^2$,  $G(x_1,\ldots,x_n)=(x_{1},x_{2}^{2}+\cdots+x_{n-1}^{2} - x_{n}^{2})$ is nice. 
	One has $V_G=\{x_{1}=0\} \cap \{x_{2}^{2}+\cdots+x_{n-1}^{2} - x_{n}^{2}=0\}$ and  $\Sing G = \{x_{2}=\cdots= x_{n}=0\}$ is the $x_1$-axis. The condition $V_G  \cap\Sing G = \{0\}$ is satisfied and the Milnor-Hamm fibration \eqref{eq:tube} exists by Theorem \ref{c4}.
\end{example}

%%%%%
%%%%%%%%%%%%%%%
\section{Partial Thom regularity condition and  Milnor-Hamm fibration} \label{s:partial-thom}
%%%%%%%%%%%%%%%

%It turns out that the Thom regularity at $V_G$ insures  the existence of the Milnor-Hamm fibration. Indeed, this follows  %since Thom regularity  implies the condition \eqref{eq:main} and therefore Lemma \ref{full} applies. 

Given some stratification of a neighbourhood of $0 \in \bR^{m}$ we recall, after \cite{GLPW, JM},  that a stratum $A$ is \textit{Thom regular} over a stratum $B\subset \bar A\m A$ at $x\in B$ relative to $G$ (or, equivalently,  that the pair $(A,B)$ satisfies the Thom (a$_G$)-regularity condition at $x$),  if the following condition holds: for any $\{x_n\}_{n\in \bN}\subset A$ such that  $x_n\to x$, if 
$T_{x_n}(G_{|A})$ converges, when $n\to \ity$,  to a limit $H$
in the appropriate Grassmann bundle,  then  $T_{x}(G_{|B})\subset H$. 
		
The notion of $\partial$-Thom regularity has been introduced in \cite[Def. 2.1]{Ti1} for analytic function germs, and 
in  \cite[A.1.1.1]{Ti3} and \cite[\S 6]{DRT} as a regularity condition in the neighbourhood of infinity.  Here we give the map germ version. The difference to the classical Thom regularity is that here we do not ask all possible conditions between couples of strata $(A,B)$ for $A$ outside $V_G$ and $B$ inside $V_G$,  but only those for  $A$ equal to the single stratum $B^{m}_{\e}\m G^{-1}(\Disc G)$.

\begin{definition}\label{defT} 
Let $G:(\bR^m, 0) \to (\bR^p,0)$ be a non-constant  analytic map germ. We say that $G$ is \textit{$\partial$-Thom regular at $V_G$} if there is a a ball $B^{m}_{\e}$ centred at $0\in \bR^{m}$ and a Whitney (a)  semi-analytic stratification $\bH=\{H_{\alpha}\}$ of $B^{m}_{\e}\cap V_G$ such that, for any stratum $H_\alpha$, the pair $(B^{m}_{\e}\m G^{-1}(\Disc G), H_\alpha)$  satisfies the Thom (a$_G$)-regularity condition. 

We then also say that such a stratification is a \emph{$\partial$-Thom  $(a_{G})$-stratification of $V_{G}$}.
\end{definition}

Let us remark that Thom regularity at $V_G$ obviously  implies $\partial$-Thom regularity at $V_G$.
 The following result shows that the weaker regularity condition is sufficient to insure the existence of the Milnor-Hamm fibration.

\begin{proposition}\label{th:tube}
	Let $G:(\bR^m, 0) \to (\bR^p,0)$ be a non-constant  nice analytic map germ. If $G$ is $\partial$-Thom regular at 
	$V_G$,  then $G$ has a Milnor-Hamm fibration \eqref{eq:tube}.
\end{proposition}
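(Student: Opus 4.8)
The strategy is to reduce to Lemma~\ref{full}: since $G$ is a non-constant nice analytic map germ, it suffices to prove that $\partial$-Thom regularity at $V_G$ implies condition~\eqref{eq:main}, i.e. that $\overline{M(G)\m G^{-1}(\Disc G)}\cap V_G \subseteq \{0\}$ as a germ at $0$. (We may assume $m>p$; if $m=p$ then $G$ is a submersion off $G^{-1}(\Disc G)$, so the map \eqref{eq:tube} is a covering over each connected component and there is nothing to prove.) Fix, as in Definition~\ref{defT}, a $\partial$-Thom $(a_G)$-stratification $\bH=\{H_\alpha\}$ of $B^m_\e\cap V_G$; after shrinking $\e$ we may moreover assume, by the classical transversality lemma for small spheres (which follows from the curve selection lemma applied to the critical set of $\rho$ on each stratum; see also \cite{Mi}), that every sphere $S^{m-1}_{\e'}$ with $0<\e'\le\e$ meets every stratum $H_\alpha$ transversally. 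This shrinking is harmless: the $(a_G)$-condition is local, and passing to the smaller open stratum $A:=B^m_\e\m G^{-1}(\Disc G)$ does not change the tangent spaces $T_x(G_{|A})=\ke dG_x$ of the fibres. The decisive algebraic input is that $\Sing G\subseteq G^{-1}(\Disc G)$, which holds because $\Disc G\supseteq \overline{G(\Sing G)}$ by Definition~\ref{d:disc}; hence $G_{|A}$ is an analytic submersion, and for a point $x\in A$ one has $\rho\not\pitchfork_x G$ if and only if $x$ is a critical point of $\rho$ restricted to the fibre $G^{-1}(G(x))$, that is, if and only if the position vector $x$ is orthogonal to $\ke dG_x$.

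Now suppose \eqref{eq:main} fails. Then there is $x\in V_G$ with $0<\|x\|<\e$ and a sequence $x_n\to x$ with $x_n\in M(G)\m G^{-1}(\Disc G)$, so $x_n\in A$ for $n\gg1$ and, by the above, $x_n\perp \ke dG_{x_n}$. Passing to a subsequence, we may assume that $T_{x_n}(G_{|A})=\ke dG_{x_n}$ converges in the Grassmannian of $(m-p)$-planes of $\bR^m$ to a limit $H$; taking the limit of $x_n\perp \ke dG_{x_n}$ gives $x\perp H$. On the other hand, $x$ lies in some stratum $H_\alpha\subseteq B^m_\e\cap V_G$, and by $\partial$-Thom regularity the pair $(A,H_\alpha)$ satisfies the Thom $(a_G)$-condition at $x$, so that $T_x(G_{|H_\alpha})\subseteq H$. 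Since $H_\alpha\subseteq V_G$ we have $G_{|H_\alpha}\equiv 0$, whence $T_x(G_{|H_\alpha})$ is the whole tangent space $T_x H_\alpha$; therefore $T_x H_\alpha\subseteq H$. Combined with $x\perp H$ this gives $x\perp T_x H_\alpha$, which means exactly that $x\ne 0$ is a critical point of $\rho_{|H_\alpha}$, equivalently that $S^{m-1}_{\|x\|}$ is not transverse to $H_\alpha$ at $x$ --- contradicting the choice of $\e$. Thus \eqref{eq:main} holds and Lemma~\ref{full} yields the Milnor-Hamm fibration \eqref{eq:tube}.

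The real content --- and the only place where something beyond formal manipulation is needed --- is the compatibility between the stratification and the distance function $\rho$: one must know, and carry along through the shrinking of $\e$, that all sufficiently small spheres are transverse to all strata of a Whitney~(a) semi-analytic stratification of the analytic germ $V_G$. Once this is in hand, the argument is a limiting argument in the Grassmann bundle, hinged on the elementary but essential observation that $G$ vanishes identically on each stratum $H_\alpha$, so that the Thom $(a_G)$-condition returns the \emph{full} tangent plane $T_x H_\alpha$ rather than a proper subspace of it.
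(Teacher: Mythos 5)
Your proof is correct and takes essentially the same route as the paper: both deduce condition \eqref{eq:main} from $\partial$-Thom regularity (using transversality of small spheres to the strata of $V_G$ and the propagation of that transversality to nearby fibres off $G^{-1}(\Disc G)$) and then conclude via Lemma \ref{full}. The paper compresses this into the claim that there is a neighbourhood $N_\e$ of $S^{m-1}_{\e}\cap V_G$ in $S^{m-1}_{\e}$ on which the sphere is transverse to the fibres of $G$ outside $G^{-1}(\Disc G)$; your Grassmannian limit argument is precisely the contrapositive of that claim, so the two proofs agree in substance, yours merely spelling out the details.
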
 
\begin{proof}

The $\partial$-Thom regularity implies that for any $\e>0$ small enough, there exists a neighborhood $N_\e$ of $S^{m-1}_{\e} \cap  V_G$ in $S^{m-1}_{\e}$ such that for any $x \in N_\e \m G^{-1}(\Disc G)$ one has $S^{m-1}_{\e} \pitchfork_x G^{-1}(G(x))$. 
	
%Indeed, since $G$ is $\partial$-Thom regular, there exists a Whitney (a) stratification $\bS=\{S_\alpha\}$ of $V_G$ such that any stratum $S_\alpha$, the pair $(B^{m}_{\e}\m G^{-1}(\Disc G), S_\alpha)$  satisfies the Thom regularity condition. Moreover, one can consider a stratification such that  for any $\e>0$ small enough the spheres $S^{m-1}_{\e}$ intercepts  transversely for any strata $S_\alpha$. 

%Let $p\in  S^{m-1}_{\e} \cap S_\alpha$. If the statement is not true, it is possible to obtain a sequence of points $\{x_n\}$ with $x_n \in S^{m-1}_{\e} \m G^{-1}(\Disc G)$ such that $x_n \to p$ and $S^{m-1}_{\e}  \not\pitchfork_{x_n}  G^{-1}(G(x_n))$. Now, up to subsequence, let us consider $\t:=\lim \,\t_{x_n}G^{-1}(G(x_n))$ in some appropriate Grassmannian manifold. By $\partial$-Thom regularity one has that $\t_pS_{\alpha} \subset \t$ which is a contradiction since $\t_pS_{\alpha} + \t_pS^{m-1}_{\e} = \bR^{m}$ . 
	
	It follows that there is  $\eta_\e >0$ such that $S^{m-1}_{\e} \cap G^{-1}(B^{m}_{\eta_{\e}}\m \Disc G) \subset N_\e$. Then the restriction \eqref{rest1} is a submersion and consequently $G$  satisfies condition \eqref{eq:main} and one may apply Lemma \ref{full}.
\end{proof}

%%%%%%%%%%%%%%%%%%%%
\subsection*{Classes of maps with $\partial$-Thom regularity}
%%%%%%%%%%%%%%%%%%%%
We single out several classes of maps which have Milnor-Hamm fibration because of being $\partial$-Thom regular, thus to which Proposition   \ref{th:tube} applies.
For that, let us formulate  the definition of the \emph{$\partial$-Thom regularity set}, in analogy to that of the Thom regularity set of \cite[\S3]{PT}. Referring to Definition \ref{defT}, let 
\begin{equation}\label{eq:Thom}
\partial \NT_{G}:=\left\lbrace x \in V_{G} \, \left| \begin{tabular}{l}
\textit{there is no $\partial$-Thom  $(a_{G})$-stratification of $V_{G}$ such}   \\
\textit{that $x$ belongs to a positive dimensional stratum.}  
\end{tabular}\right.
 \right\rbrace 
\end{equation}

%This condition is empty when $V_{G}=0$
 %It seems to me that if we add up the supplementary condition $V\not= \{0\}$, then it is sufficient for what we want to prove in % the next statements involving  the $\partial$-Thom regularity.

The definition of the \emph{Thom regularity set}  $\NT_{G}$  of  \cite[\S3]{PT}
is obtained by just replacing  ``$\partial$-Thom'' by ``Thom''  in the above definition. 

Let us first consider maps $G= (f,g)$, where $f, g : (\mathbb{C}^n,0) \to (\mathbb{C},0)$, $n> 1$, are non-constant complex analytic function germs, and let  $f\bar{g}:(\mathbb{C}^n, 0) \to (\mathbb{C}, 0)$. The latter are a special class of mixed functions. 
%The equality $V_{f\bar{g}} = V_{(f, g)}$ enables one to compare $\NT_{f\bar{g}}$ to $\NT_{(f,g)}$. 
%Let us first give 
%an extension of the statements  \cite[Theorem 3.1, Corollary 4.1]{PT} without the condition $\Disc f\bar{g} = \{0\}$.
   
 \begin{theorem}\label{c3} \label{t:mainfbarg}
 Let $f, g : (\mathbb{C}^n,0) \to (\mathbb{C},0)$, $n > 1$, be  non-constant holomorphic functions.
 %such that the map germ $(f,g)$ is nice. Then one has the inclusion $\NT_{f\bar{g}} \subset \NT_{(f,g)}$.
Then:
\begin{enumerate}
%\rm \item \it If the map $(f,g)$ is nice and Thom regular, then  $f\bar{g}$ is nice and Thom regular. 
\rm \item  \it If $(f,g)$ defines an ICIS then the map germ $f\bar{g}$ is nice,  Thom regular and has a Milnor-Hamm fibration. 
\rm \item  \it If  $\gcd(f,g)=1$ and the map $(f,g)$ is Thom regular (i.e. $\NT_{(f,g)}\s \{0\}$), then  the map $f\bar{g}$ has a Milnor-Hamm fibration.
\rm \item  \it  
  If   $\gcd(f,g)=1$,  if $\Disc (f,g)$ consists of only lines (for instance if $f$ and $g$ are homogeneous of the same degree), and if $(f,g)$ is $\partial$-Thom regular, 
then the map $f\bar{g}$  is $\partial$-Thom regular and has a Milnor-Hamm fibration.
\end{enumerate}
\end{theorem}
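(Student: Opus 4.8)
The plan is to follow the decomposition strategy used already in the proof of Theorem \ref{p:fbarg-tame} and in \cite{PT}: write $f\bar g$ as the composition of the holomorphic map $\Phi = (f,g):(\bC^n,0)\to(\bC^2,0)$ followed by the model mixed function $\psi := u\bar v:(\bC^2,0)\to(\bC,0)$. Since $\gcd(f,g)=1$ and by Theorem \ref{p:fbarg-tame}, $f\bar g$ is already known to be nice, so only the regularity assertion needs proof; the Milnor-Hamm fibration then follows from Proposition \ref{th:tube}. First I would record that $\Disc\psi = \im\psi = \bR_{\ge 0}$ and that $\psi$ itself is $\partial$-Thom regular at $V_\psi = \{uv=0\}$ (indeed Thom regular, as in Example \ref{ex:hh}): the relevant stratification of $\{uv=0\}$ is $\{0\}$ together with the two punctured axes. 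Then $\Disc(f\bar g) \subseteq \Phi^{-1}$ analysis shows it is a finite union of semi-analytic curve germs, so near $0$ the complement $B^2_\eta\m\Disc(f\bar g)$ splits into finitely many open sectors.

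The key step is to build a $\partial$-Thom $(a_{f\bar g})$-stratification of $V_{f\bar g} = \Phi^{-1}(\{uv=0\}) = V_f\cup V_g$. The hypothesis that $\Disc(f,g)$ consists only of lines through $0$ is what makes this tractable: over $\bC^2\m(\{uv=0\}\cup\Disc(f,g))$ the map $\Phi$ is a submersion onto its image and $\psi$ is a submersion there, so $\partial$-Thom regularity of $f\bar g = \psi\circ\Phi$ away from $V_f\cup V_g$ reduces, via the chain rule $T_x(f\bar g) = T_{\Phi(x)}\psi \circ T_x\Phi$, to: (i) the $\partial$-Thom regularity of $\Phi$ at $V_f\cup V_g$, which is the hypothesis; (ii) the compatibility of limits of tangent spaces of $\Phi$ with the Thom stratification of $\{uv=0\}$ in the target — here the fact that $\Disc(f,g)$ is a union of lines ensures that the limiting tangent spaces to the fibres of $\Phi$, pushed forward by the linear maps cutting out the strata of $\{uv=0\}$, behave controllably and do not develop extra degeneracies. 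Concretely I would take as strata of $V_f\cup V_g$ a refinement of a $\partial$-Thom $(a_\Phi)$-stratification that is simultaneously compatible with $\Phi^{-1}(\{u=0\})$, $\Phi^{-1}(\{v=0\})$ and with the preimages of the lines in $\Disc(f,g)$, and then verify the $(a_{f\bar g})$ condition stratum by stratum using the two facts above.

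The main obstacle I expect is step (ii): controlling the limits of $T_{x_n}((f\bar g)_{|B^m_\e\m G^{-1}(\Disc)})$ as $x_n\to x\in V_f\cup V_g$ when the sequence $x_n$ runs along $G$-fibres whose image under $\Phi$ approaches $\{uv=0\}$ along, or asymptotic to, a line of $\Disc(f,g)$. One must show that the limit Grassmannian plane $H$ still contains $T_x(H_\alpha)$ for the chosen stratum $H_\alpha$. The linearity of the discriminant components is precisely the mechanism that prevents pathological spiralling of $\Phi$-fibres near those components: along a line $\ell = \{v = c\,u\}$ one can perform an affine change of coordinates in the target straightening $\ell$ to an axis, after which the analysis collapses to the already-understood regularity of the model $u\bar v$ composed with a $\partial$-Thom regular holomorphic map. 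I would phrase this last reduction carefully, since it is where the hypotheses are genuinely used, and then invoke Proposition \ref{th:tube} to conclude that $f\bar g$ has a Milnor-Hamm fibration. A remark that the homogeneous case of $f,g$ of equal degree indeed forces $\Disc(f,g)$ to be a cone over points, hence a union of lines, closes the statement.
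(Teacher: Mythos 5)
Your skeleton (decompose $f\bar g = u\bar v\circ (f,g)$, get niceness from Theorem \ref{p:fbarg-tame}, prove $\partial$-Thom regularity, conclude via Proposition \ref{th:tube}) is the same as the paper's, but the heart of the argument is missing and the mechanism you propose for it would not work. The step you yourself flag as ``the main obstacle'' --- deducing the Thom $(a_{f\bar g})$ condition along the strata of $V_{f\bar g}$ from the regularity of $(f,g)$ --- is never carried out; the paper does not prove it from scratch either, but outsources it to Step~2 of the proof of \cite[Theorem 3.1]{PT}, which is exactly the delicate tangent-space computation where the earlier literature went wrong (see Remark \ref{r:pichonseademistake}). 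Your substitute heuristic --- straighten a discriminant line $\ell=\{v=cu\}$ to an axis by an affine change of target coordinates, ``after which the analysis collapses to the model $u\bar v$'' --- fails at the outset: a linear change of coordinates in the target does not preserve the function $u\bar v$ (under $(u,v)\mapsto(u,v-cu)$ it becomes $u\bar v+\bar c|u|^2$), so nothing reduces to the already-understood model. Moreover you misidentify where the ``lines only'' hypothesis enters. Its role is not to control fibres of $(f,g)$ spiralling near $(f,g)^{-1}(\Disc(f,g))$; it is that the $u\bar v$-image of every line component is a real half-line (or $\{0\}$) contained in $\Disc f\bar g$, whence $(f,g)^{-1}(\Disc(f,g))\subset (f\bar g)^{-1}(\Disc f\bar g)$. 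Since the $\partial$-Thom condition only involves sequences in $B^m_\e\m (f\bar g)^{-1}(\Disc f\bar g)$, all relevant sequences automatically avoid $\Sing(f,g)$; this is precisely why the $\partial$-Thom regularity of $(f,g)$ suffices in (c), whereas (b) needs full Thom regularity to handle sequences inside components of $\Sing(f,g)$ with non-line images.

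A second, smaller inaccuracy: you call ``the hypothesis'' the $\partial$-Thom regularity of $\Phi=(f,g)$ \emph{at $V_f\cup V_g$}, but by Definition \ref{defT} the hypothesis concerns only $V_{(f,g)}=V_f\cap V_g$, while $V_{f\bar g}=V_f\cup V_g$ is strictly larger. The strata of $V_{f\bar g}$ lying in $(V_f\cup V_g)\m(V_f\cap V_g)$, where $g$ (resp.\ $f$) is a unit and $f\bar g$ behaves like $f$ times an antiholomorphic unit, require a separate (standard, Hironaka-type) argument --- this is Step~1 of \cite[Theorem 3.1]{PT} --- and your plan does not address them. To repair the proposal you should: (i) restrict attention from the start to sequences outside $(f\bar g)^{-1}(\Disc f\bar g)$ and use the line hypothesis to conclude they lie outside $\Sing(f,g)$; (ii) at such regular points of $(f,g)$, either reproduce the argument of \cite[Theorem 3.1, Step 2]{PT} or argue directly that $\ker d(f,g)_{x_n}$ is contained in the tangent space to the $f\bar g$-fibre, so the $\partial$-Thom $(a_{(f,g)})$ condition passes to limits for $f\bar g$ along strata of $V_f\cap V_g$; and (iii) treat the strata of $V_{f\bar g}$ away from $V_f\cap V_g$ separately as above. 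As written, the proposal asserts the conclusion of the key step rather than proving it.
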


\begin{remark}\label{r:pichonseademistake}
 There are some well-known attempts to endowing $f\bar g$ with a Thom regular stratification.   Pichon and Seade stated and proved a general theorem in $n>2$ variables. As this was pointed out to be false  \cite{Pa}, \cite{Ti4},  in the subsequent erratum \cite{PS2} the authors presented Parusinski's counterexample \cite{Pa}, \cite{Ti4}, \cite[p. 827]{ACT}, \cite{Oka4}. Even in $n=2$ variables, the proof  of the statement  published earlier \cite[Proposition 1.4]{PS} has the same problem since it tacitly assumes that the normal space at a point of some fibre of $f\bar g$ is generated by a single vector.  
 
 The status of the existence of a  Thom regular stratification for $f\bar g$ was established recently in  \cite[Theorem 3.1]{PT} (see \cite{PT2} for the rectified statement of this theorem), based on different grounds, and for a large class of  maps $f\bar g$ in $n\ge 2$ variables.   In particular the proof  of  \cite[Theorem 3.1]{PT} shows that 
  the statement of the above discussed result  in 2 variables \cite[Proposition 1.4]{PS} is nevertheless correct.
\end{remark}

\begin{proof}[Proof of Theorem \ref{t:mainfbarg}]
In the following we use the proof of \cite[Theorem 3.1]{PT}. That proof starts with the hypothesis $\Disc f\bar g \subset \{0 \}$ and therefore remains unchanged after the rectification  \cite{PT2}\footnote{The corrigendum \cite{PT2} rectifies the erroneous criterion for the inclusion $\Disc f\bar g \subset \{0 \}$ given in \cite{PT}, as follows:
``\emph{$\Disc f\bar g \subset \{0 \}$  iff  the discriminant $\Disc (f,g)$ of the map $(f,g)$ contains 
only curve components which are tangent to the coordinate axes}''.} of the statement of \cite[Theorem 3.1]{PT}.

\smallskip
\noindent
(a). If $(f,g)$ defines an ICIS then there is the equality of germs $[\im (f,g)]_0 = [\bC^{2}]_{0}$, and $\Disc (f,g)$ is a well-defined curve germ in $(\bC^{2}, 0)$, see e.g. \cite{Lo, Lo2}. In particular it follows that $(f,g)$ is a nice map. It then follows from the proof of Theorem \ref{p:fbarg-tame} that $f\bar{g}$ is a nice map germ. 

In order to prove the $\partial$-Thom regularity of $f\bar{g}$, we proceed as follows. 
By Step 1 of the proof of \cite[Theorem 3.1]{PT},  we have $\NT_{(f,g)}\s V_{(f,g)}$. Since $\Sing (f,g) \cap V_{(f,g)} = \{0\}$, it follows that $(f,g)$ is Thom regular. One remarks that the Thom $(a_{(f,g)})$-regularity of the pair of strata $(B_{\e}\m \Sing  (f,g),  V_{(f,g)}\m \{0\})$ implies the Thom $(a_{(f\bar g)})$-regularity of the same pair; one can find all details 
in Step 2 of the proof of \cite[Theorem 3.1]{PT} where this claim is showed. In our simple situation, this just means that $f\bar g$ is $\partial$-Thom regular.

One may now apply Proposition \ref{th:tube} to conclude that the Milnor-Hamm fibration exists.

\smallskip
\noindent
(b). If  $\gcd(f,g)=1$, then it follows from Theorem \ref{p:fbarg-tame} that the maps $f\bar g$ is nice.

In order to prove the $\partial$-Thom regularity of $f\bar g$ it is only necessary to check the Thom regularity condition outside the inverse image of the discriminant set $\Disc f\bar{g}$.  This avoids the line components of the discriminant of $(f,g)$ but not the other components of the same  $\Disc (f, g)$. Nevertheless, the 
 %We have to show that the condition $\Disc f\bar{g} = \{0\}$ of \cite[Theorem 3.1]{PT} can be dropped. 
Step 2 of the proof of  \cite[Theorem 3.1]{PT} takes into account the components of the singular set $\Sing (f,g)$ which do not have line images. It therefore appears as a result of this proof  that the Thom $(a_{(f,g)})$-regularity condition, including along the components of  the singular set $\Sing (f,g)$, implies the
$\partial$-Thom $(a_{(f\bar g)})$-regularity.

Lastly we may apply the same  Proposition \ref{th:tube} for getting the Milnor-Hamm fibration.

\smallskip
\noindent
(c). Referring to  the proof of Theorem \ref{p:fbarg-tame}, let
 us observe that if $\Disc f\bar{g}$ contains a real half-line, then its inverse image by $u\bar v$ is a line component of 
$\Disc (f, g)$. Therefore the proof of (c) can be extracted from the proof of (b) by remarking that since there are  only line components of  $\Disc (f, g)$
we only deal with the Thom condition between strata outside the singular locus of $(f,g)$, which means the we only need the  $\partial$-Thom regularity of $(f,g)$ and not its full Thom regularity.
\end{proof}

%%%%%%%%%%%%%%%%%%%%
While we do not know if the following statement  is true for the  stronger Thom regularity,  we can prove it for the $\partial$-Thom regularity, which is enough for our purpose, in view of Proposition \ref{th:tube}.
\begin{proposition}\label{p3}
Let $f:(\bR^{m},0)\to (\bR^{p},0)$ and $g:(\bR^{n},0)\to (\bR^{k},0)$ be nice analytic maps, in separate variables,  and satisfying the  $\partial$-Thom regularity condition at $V_{f} \not= \{0\}$ and $V_{g} \not= \{0\}$, respectively. Then the map $G:= (f,g)$ is nice and $\partial$-Thom regular at $V_G$.  

%we have a problem when $V =0$ again, like everywhere, since if  $V =0$ then Thom regularity means absolutely nothing.
%It seems to me that if we add the condition $V\not= \{0\}$, then it is ok.  

In particular, if $f$ and $g$ are holomorphic functions in separate variables, then
 $f\bar{g}$ is  $\partial$-Thom regular and thus has a Milnor-Hamm fibration.
\end{proposition}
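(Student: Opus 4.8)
\emph{Step 1: niceness and the discriminant of $G$.} The plan is to exploit throughout the block-diagonal structure of the differential, so I write the variables of $\bR^{m+n}$ as $(x,y)$, $x\in\bR^m$, $y\in\bR^n$, so that $G(x,y)=(f(x),g(y))$ and $DG(x,y)$ is block-diagonal with blocks $Df(x),Dg(y)$. Since $f,g$ are in separate variables, $\im G=\im f\times\im g$, $\Sing G=(\Sing f\times\bR^n)\cup(\bR^m\times\Sing g)$, hence $G(\Sing G)=(f(\Sing f)\times\im g)\cup(\im f\times g(\Sing g))$; and from $B^m_{\e/2}\times B^n_{\e/2}\subset B^{m+n}_{\e}\subset B^m_{\e}\times B^n_{\e}$ a squeezing argument turns well-definedness of $\im f,\im g,f(\Sing f),g(\Sing g)$ as germs at $0$ into that of $\im G$ and $G(\Sing G)$, so $G$ is nice. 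Taking closures and top-dimensional interiors factorwise, I would record the formula
\[
\Disc G=\bigl(\Disc f\times\overline{\im g}\bigr)\cup\bigl(\overline{\im f}\times\Disc g\bigr),
\]
of which only the inclusion "$\supseteq$" is needed below.

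\emph{Step 2: the candidate stratification and the key inclusion.} I would fix a $\partial$-Thom $(a_f)$-stratification $\{A_\alpha\}$ of $V_f$ on a ball $B^m_{\e_f}$ and a $\partial$-Thom $(a_g)$-stratification $\{B_\beta\}$ of $V_g$ on $B^n_{\e_g}$, and for $\e\le\min(\e_f,\e_g)$ take on $V_G=V_f\times V_g$ the product stratification $\{A_\alpha\times B_\beta\}$, restricted to $B^{m+n}_{\e}\cap V_G$ and split into connected components --- it is semi-analytic and Whitney (a), both properties being stable under products. From $\Disc G\supseteq\Disc f\times\overline{\im g}$ and $g(y)\in\im g\subset\overline{\im g}$ one gets that $z=(x,y)\notin G^{-1}(\Disc G)$ forces $f(x)\notin\Disc f$, and symmetrically $g(y)\notin\Disc g$; together with $\|x\|,\|y\|<\e$ this yields the crucial inclusion
\[
B^{m+n}_{\e}\setminus G^{-1}(\Disc G)\ \subseteq\ \bigl(B^m_{\e}\setminus f^{-1}(\Disc f)\bigr)\times\bigl(B^n_{\e}\setminus g^{-1}(\Disc g)\bigr).
\]

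\emph{Step 3: checking Thom $(a_G)$-regularity.} On the set on the left $x\notin\Sing f$ and $y\notin\Sing g$ (since $f^{-1}(\Disc f)\supseteq\Sing f$ and likewise for $g$), so $Df(x),Dg(y)$ are surjective and $\ker DG(x,y)=\ker Df(x)\times\ker Dg(y)$ has locally constant dimension. The map $(V,W)\mapsto V\times W$ from the product of the two compact Grassmannians in which $\ker Df$, resp. $\ker Dg$, vary into the Grassmannian of $\bR^{m+n}$ is a continuous injection, hence a topological embedding; so if $z_n=(x_n,y_n)\to(x_0,y_0)\in A_\alpha\times B_\beta$ inside $B^{m+n}_{\e}\setminus G^{-1}(\Disc G)$ with $\ker DG(z_n)\to\tau$, then necessarily $\ker Df(x_n)\to\tau_1$, $\ker Dg(y_n)\to\tau_2$ with $\tau=\tau_1\times\tau_2$, no passage to a subsequence being needed. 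Applying the Thom $(a_f)$-regularity of the pair $(B^m_{\e}\setminus f^{-1}(\Disc f),A_\alpha)$ to $x_n\to x_0$ gives $T_{x_0}A_\alpha\subseteq\tau_1$, and symmetrically $T_{y_0}B_\beta\subseteq\tau_2$; since $G\equiv0$ on $A_\alpha\times B_\beta\subset V_G$, this means $T_{(x_0,y_0)}(A_\alpha\times B_\beta)=T_{x_0}A_\alpha\times T_{y_0}B_\beta\subseteq\tau$, which is the Thom $(a_G)$-condition. Hence $G$ is $\partial$-Thom regular at $V_G$.

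\emph{Step 4: the holomorphic corollary, and the main obstacle.} A non-constant holomorphic function germ $h:(\bC^c,0)\to(\bC,0)$ is nice ($h$ is open, and $dh$ vanishes on every irreducible component of $\Sing h$ through $0$, so $h$ is constant $=0$ there and $h(\Sing h)=\{0\}$ as a germ, $\Disc h\subseteq\{0\}$) and is Thom regular --- hence $\partial$-Thom regular --- at $V_h$ by the classical existence of Thom $(a_h)$-stratifications for holomorphic functions. Taking $f,g$ to be such germs in at least two variables each, so that $V_f\ne\{0\}\ne V_g$, the first part of the proposition gives that $(f,g)$ is $\partial$-Thom regular at $V_{(f,g)}$; moreover $\gcd(f,g)=1$ (a non-unit common factor would cut out a hypersurface inside $V_f\times V_g$, a set of codimension $\ge 2$) and, by the formula of Step 1, $\Disc(f,g)\subseteq(\{0\}\times\bC)\cup(\bC\times\{0\})$ consists only of lines, so Theorem \ref{t:mainfbarg}(c) applies and gives that $f\bar g$ is $\partial$-Thom regular and has a Milnor-Hamm fibration. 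The only step requiring genuine care is the discriminant computation of Step 1 together with the inclusion of Step 2: it is exactly this inclusion that allows the two separate hypotheses on $f$ and $g$ to be used simultaneously, and once it is in place the remaining arguments --- niceness, the product stratification, the block-diagonal Jacobian, the Grassmannian embedding --- are routine.
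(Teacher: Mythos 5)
Your proof is correct and follows essentially the same route as the paper: niceness via the product structure (which the paper leaves as an easy exercise), the product stratification $\cS_f\times\cS_g$ of $V_G=V_f\times V_g$ for the $\partial$-Thom regularity, and Theorem \ref{t:mainfbarg}(c) for the holomorphic claim. You merely make explicit the details (the key inclusion into the product of the complements of $f^{-1}(\Disc f)$ and $g^{-1}(\Disc g)$, and the Grassmannian limit argument) that the paper declares standard and skips.
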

\begin{proof} 
The niceness matter is an easy exercise which we may safely leave to the reader.

Next, the fibre $G^{-1}(c_{1},c_{2})\subset  \bR^{m}\times \bR^{n}$ of the map $G:=(f,g):(\bR^{m}\times \bR^{n},0)\to (\bR^{p}\times \bR^{k},0)$ is the product $f^{-1}(c_{1})\times g^{-1}(c_{2})$. If $\mathcal{S}_{f}$ and $\mathcal{S}_{g}$ are $\partial$-Thom regular stratifications of $f$ and $g$, respectively,  then the product stratification $\mathcal{S}_{f}\times \mathcal{S}_{g}$ on $(\bR^{m}\times \bR^{n},0)$ is a  $\partial$-Thom regular stratification of $V_G$.  The proof is also standard and we may safely skip it.

 Our last claim follows from Theorem \ref{t:mainfbarg}(c) since the  holomorphic functions  $f$ and $g$ 
 satisfy the hypotheses of that statement, as one can easily check. 
\end{proof}

%Parameswaran and Tib\u ar \cite{PT} proved  the following results under the condition $\Disc f\bar{g} = \{0\}$. Let us  show %that this condition  can be dropped.   

%Our results are therefore providing new classes of real map germs with Milnor fibration, by Proposition \ref{th:tube}.

\begin{example}
Let  $G:(\mathbb{C}^4,0) \to (\mathbb{C}^2,0)$ given by $G(z_1,z_2,z_3,z_4)=(z_1 \bar{z}_2,z_3 \bar{z}_4)$. By Proposition \ref{p3} each component of $G$ is a $\partial$-Thom regular function. Since the map $G$ is in separate variables,  Proposition \ref{p3} yields that $G$ is $\partial$-Thom regular as well. By Lemma \ref{full} the Milnor-Hamm fibration \eqref{eq:tube} exists for $G$. One has $V_G \cap \Sing G \supsetneq \{0\}$ and  $\Disc G = \{(z,0)\in \mathbb{C}^2\,|\,z\in \mathbb{C}\}\cup \{(0,w)\in \mathbb{C}^2\,|\,w\in \mathbb{C}\}$.

\end{example}

\begin{example} 
We construct a map germ $G:=(f,g):(\bR^{3}\times \bR^{4},0)\to (\bR^{4},0)$ as follows.
Let  $f:(\bR^3,0) \to (\bR^2,0)$, $f(x,y,z)=(y^4-z^2x^2-x^4,xy).$ One has that $V_{f} = \Sing f=\{x=y=0\}$, and $\im f = \bR^{2}$, thus $\Disc f = \{0\}$, in particular the map is nice.  It was proved in  \cite[Example 5.3]{ACT} that $f$ is Thom regular. 

Let  $g:=(g_1,g_2):(\bR^4,0) \to (\bR^2,0)$ be defined by:
\[\left\lbrace \begin{array}{ccl}
g_1(a,b,c,d)&= -2d^3ab-3d^2a^2c+3d^2b^2c+6dabc^2+a^2c^3-b^2c^3 &   \medskip\\
g_2(a,b,c,d)&= d^3a^2-d^3b^2-6d^2abc-3da^2c^2+3db^2c^2+2abc^3. &  \medskip
\end{array}\right.\] 

By computing the gradients, we get $\left\langle \nabla g_1, \nabla g_2 \right\rangle =0$
and $\|\nabla g_1 \|^2 =(a^2+b^2)(d^2+c^2)^2(4d^2+9a^2+9b^2+4c^2)= \|\nabla g_2 \|^2$, which shows that $g$ is a \emph{simple \L-map} after \cite[Definition 3.5]{Ma}, and in particular has isolated critical value.  Applying \cite[Theorem 2.12, Lemma 5.5]{Ma}, one obtains that  $g$ is Thom regular at $V_{g}$. In particular  its image covers an open subset containing the origin, hence we get that $\Disc g =\{ 0\}$, and that $g$ is nice.   

It then follows by Proposition \ref{p3} that  $G$  is nice and is $\partial$-Thom regular and hence it has a Milnor-Hamm fibration.  Since $\Disc G=\{0\}\times \bR^{2} \cup \bR^{2} \times \{0\}$, the complement of $\Disc G$ is connected, hence there is a unique fibre.  
\end{example}

%%%%%%%%%%%%%%%%%%%%%%%%%%%%%%
%%%%%%%%%%%%%%%%%%%%%%%%%%%%%%
\section{Constructing examples}\label{examples}
The first example with Milnor tube fibration but without Thom regularity occurred in case $\Disc G = \{0\}$ in \cite{Ti4, ACT}; see also \cite{Oka4, PT} and \cite{Ha}.
We show here  some examples in our new setting  $\Disc G \supsetneq \{0\}$, based on the  following:

\begin{lemma}\label{p4}\label{r:coupleId}
	Let $f:(\bR^m,0) \to (\bR^{p},0)$ be an analytic map germ and let $g:(\bR^n,0) \to (\bR^n,0)$ be the identity function $g(y)=y$. 	Consider the analytic map germ $G:=(f,g):(\bR^{m}\times \bR^n,0) \to (\bR^{p}\times \bR^n,0)$. Then:
\begin{enumerate}
\rm \item \it   $f$ is $\partial$-Thom regular if and only if $G$ is $\partial$-Thom regular.

\rm \item \it  $f$ satisfies  condition \eqref{eq:main} if and only if $G$ satisfies  condition \eqref{eq:main}.
\end{enumerate}
\end{lemma}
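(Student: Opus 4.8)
The plan is to exploit the product structure $G(x,y)=(f(x),y)$ everywhere. First I would set up the dictionary between $G$ and $f$. From the block form $DG(x,y)=\mathrm{diag}(Df(x),I_{n})$ one reads off $\Sing G=\Sing f\times\bR^{n}$, $V_{G}=V_{f}\times\{0\}$ and $\im G=\im f\times\bR^{n}$ as set germs, whence (cf. the niceness part of Proposition \ref{p3}) $G$ is nice iff $f$ is, and $\Disc G=\Disc f\times\bR^{n}$, so that $G^{-1}(\Disc G)=f^{-1}(\Disc f)\times\bR^{n}$. Since all four notions appearing in (a) and (b) refer to $\Disc$, there is no loss in assuming $f$, hence $G$, nice; and since the Whitney/Thom conditions and condition \eqref{eq:main} are local, I may freely use product balls $B^{m}_{\e}\times B^{n}_{\e}$ in place of round ones. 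I would also record that, because $\Sing f\subseteq f^{-1}(f(\Sing f))\subseteq f^{-1}(\Disc f)$, the open top stratum $B_{\e}\m G^{-1}(\Disc G)$ consists of regular points of $G$, on which $\ker DG(x,y)=\ker Df(x)\times\{0\}$.

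For part (b), the key point is that $M(G)=M(f)\times\bR^{n}$. Indeed $(x,y)\in M(G)$ iff the $(p+n+1)\times(m+n)$ matrix with rows $[\,Df(x)\mid 0\,]$, $[\,0\mid I_{n}\,]$ and the position row $[\,x^{\top}\mid y^{\top}\,]$ has non-maximal rank; using the $I_{n}$-block to clear the $y^{\top}$-entries of the last row shows this rank equals $n+\mathrm{rank}\begin{pmatrix}Df(x)\\ x^{\top}\end{pmatrix}$, which is deficient iff $x\in M(f)$. Consequently $M(G)\m G^{-1}(\Disc G)=\bigl(M(f)\m f^{-1}(\Disc f)\bigr)\times\bR^{n}$, and intersecting the closure of this set with $V_{G}=V_{f}\times\{0\}$ yields $\bigl(\overline{M(f)\m f^{-1}(\Disc f)}\cap V_{f}\bigr)\times\{0\}$, the $\bR^{n}$-factor being irrelevant once intersected with $\{0\}$. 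This is contained in $\{0\}$ exactly when \eqref{eq:main} holds for $f$.

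For part (a), I would observe that $S\mapsto S\times\{0\}$ is a bijection between Whitney (a) semi-analytic stratifications $\{S_{\beta}\}$ of $B^{m}_{\e}\cap V_{f}$ and such stratifications $\{S_{\beta}\times\{0\}\}$ of $B_{\e}\cap V_{G}=(B^{m}_{\e}\cap V_{f})\times\{0\}$, both properties being preserved in both directions. Under this correspondence the Thom $(a_{G})$-condition for the pair $(B_{\e}\m G^{-1}(\Disc G),\,S_{\beta}\times\{0\})$ and the Thom $(a_{f})$-condition for $(B^{m}_{\e}\m f^{-1}(\Disc f),\,S_{\beta})$ are equivalent. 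On the top strata $G$ is a submersion, so along a sequence $(x_{n},y_{n})\to(x_{0},0)$ in $B_{\e}\m G^{-1}(\Disc G)$ one has $T_{(x_{n},y_{n})}(G_{|\mathrm{top}})=\ker Df(x_{n})\times\{0\}$, which converges to $H'\times\{0\}$ precisely when $T_{x_{n}}(f_{|\mathrm{top}})=\ker Df(x_{n})$ converges to $H'$ (the $x_{n}$ lie in $B^{m}_{\e}\m f^{-1}(\Disc f)$, and $V\mapsto V\times\{0\}$ is a closed embedding of Grassmannians, so subsequential limits match); conversely each such sequence for $f$ lifts via $x_{n}\mapsto(x_{n},0)$. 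Finally, since $f\equiv 0$ on $S_{\beta}$ and $G$ is constant on $S_{\beta}\times\{0\}$, we get $T_{x_{0}}(f_{|S_{\beta}})=T_{x_{0}}S_{\beta}$ and $T_{(x_{0},0)}(G_{|S_{\beta}\times\{0\}})=T_{x_{0}}S_{\beta}\times\{0\}$, so $T_{(x_{0},0)}(G_{|S_{\beta}\times\{0\}})\subseteq H'\times\{0\}$ holds iff $T_{x_{0}}(f_{|S_{\beta}})\subseteq H'$. Both implications of (a) follow at once.

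The product formulas for $\Sing$, $V_{G}$, $\im G$, $\Disc G$ and the stability of Whitney (a) and semi-analyticity under the correspondence $S\leftrightarrow S\times\{0\}$ I would dispatch quickly as routine. The step requiring the most care is the identification $M(G)=M(f)\times\bR^{n}$ together with the remark that $B_{\e}\m G^{-1}(\Disc G)$ avoids $\Sing G$: it is exactly this that makes the relevant Jacobian kernels, and hence the Grassmannian limits entering the Thom condition, split off the $\bR^{n}$-direction, so that the two Thom conditions (and the two instances of \eqref{eq:main}) become literally equivalent rather than merely related.
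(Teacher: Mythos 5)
Your proof is correct and follows essentially the same route as the paper: part (b) rests on the identifications $V_G=V_f\times\{0\}$, $G^{-1}(\Disc G)=f^{-1}(\Disc f)\times\bR^n$ and $M(G)=M(f)\times\bR^n$ (which the paper uses implicitly and you justify by the rank computation), and your product-stratification argument for (a) is exactly the ``simple exercise'' the paper leaves to the reader. The key observations you highlight --- that the top stratum avoids $\Sing G$ and that the fibre tangents split as $\ker Df(x)\times\{0\}$ --- are the right ones, so no gaps remain.
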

\begin{proof}
(a) is a simple exercise. Let us prove (b).
	Assume that $f$ satisfies condition $\eqref{eq:main}$. Let $p_0=(x_0,y_0)\in \overline{M(G)\m G^{-1}(\Disc G)}\cap V_G$. One has a sequence of points $p_n=(x_n,y_n)\to p_0$ such that $p_n \in M(G)\m G^{-1}(\Disc G)=M(f)\times \bR^n \m f^{-1}(\Disc f)\times\bR^n$ and $p_0\in V_G=V_f\times\{0\}$. Thus, $y_0=0$, $x_0\in V_f$ and $x_n\in M(f) \m f^{-1}(\Disc f)$. Consequently, $x_0\in \overline{M(f)\m f^{-1}(\Disc f)}\cap V_f \subset \{0\}$ which implies  $p_0 = (0,0)$ and $G$ satisfies the condition \eqref{eq:main}. 
	
% $\Sing G = \Sing f \times \bR^{n}$ and $M(G) = M(f) \times \bR^{n}$
	
	Assume that $\overline{M(G)\m G^{-1}(\Disc G)}\cap V_{G} \subseteq \{0\}$. One has that $$ \left(M(f)\m f^{-1}(\Disc f)\right)\times \{0\}  \subset M(G)\m G^{-1}(\Disc G).$$
	For any $ x_0 \in \overline{M(f) \m f^{-1}(\Disc f)} \cap V_f$  there exists a sequence $ x_n \subset M(f) \m f^{-1}(\Disc f) $ such that $ x_n \to x_0$. Hence, $ (x_n , 0) \in M(G)\m G^{-1}(\Disc G)$ for all $n \in \mathbb{N}$ and the limit point $ (x_0, 0) \in \overline{M(G)\m G^{-1}(\Disc G)} \cap V_{G}$. We conclude that $ x_0 = 0$, which implies that $f$ satisfies the condition \eqref{eq:main}.
\end{proof}

\begin{example} \label{ex:1couple}
	Let $G=(f,g)=(\bR^4,0) \to (\bR^3,0)$, where $f(x,y,z)=(x,y(x^2+y^2)+xz^2)$ and $g(w)=w$.   After \cite[Example 1.4.9]{Ha},  the map $f$ has isolated critical value and satisfies the condition \eqref{eq:main}. We may easily check that $f$ is an open map at 0, and that $f$ is nice. By Lemma \ref{p4},  $G$ satisfies the condition \eqref{eq:main}, therefore $G$ has a Milnor-Hamm fibration.   Note that $\Disc G = \{(0,0)\} \times \bR$, and that $G$ is nice.
	
On the other hand, \cite[Example 2.3.9]{Ha} shows that the map $f$ is not Thom regular. 
Therefore, by  Lemma \ref{r:coupleId}(a), the map $G$ cannot be $\partial$-Thom regular.	
\end{example}

\begin{example}  \label{ex:2couple}
Let $F$ be one of the mixed functions:
\begin{enumerate}
\item [1)] $F_{1}(x,y)=xy\bar{x}$ from \cite{ACT},
\item [2)] $F_{2}(x,y,z)=(x+z^k)\bar{x}y$ for a fixed $k\geq 2$ from \cite{PT}, 
\item [3)] $F_{3}(w_1,\ldots,w_n)=w_1\left(\sum_{j=1}^{k}|w_j|^{2a_{j}} - \sum_{t=k+1}^{n}|w_t|^{2a_{t}}\right)$ from \cite{Oka4}. 
\end{enumerate}
They are all  polar weighted-homogeneous and thus, by \cite[Theorem 1.4]{ACT}, one obtains that  $\Disc F = \{ 0\}$ and that $F$  is nice and has Milnor tube fibration.  It was also proved in the respective papers that $F$ is not Thom regular.

Let then $G:=(F,g)$, where $g(v)=v$ and note that $\Disc G=\{0\} \times \mathbb{C}$. By Lemma \ref{p4} the map $G$ satisfies condition \eqref{eq:main} and therefore, by Lemma \ref{full}, $G$ has Milnor-Hamm  fibration. However, $G$ is not $\partial$-Thom regular, cf Lemma \ref{r:coupleId}(a). 
\end{example}

\begin{example}
	Let $G=(f,g):(\bR^6 \times \bR^n,0) \to (\bR^4\times \bR^n,0) $, where $g:(\bR^n,0) \to (\bR^n,0))$ is the identity map germ and 
	$ f:=(f_1,f_2,f_3,f_4):(\bR^6,0) \to (\bR^4,0) $ where 
	
	$$ \left\lbrace \begin{array}{ccl}
	f_1(x,y,z,w,a,b)&= & yw+xz  \medskip\\
	f_2(x,y,z,w,a,b)&= & xw-yz \medskip\\
	f_3(x,y,z,w,a,b)&= & ax+by \medskip\\
	f_4(x,y,z,w,a,b)&= & ay-bx \medskip\\
	
	\end{array}\right. $$
	
	One has $V_f  =  \{{x}^{2}+{y}^{2}=0\} \cup \{{a}^{2}+{b}^{2}+{z}^{2}+{w}^{2}=0\}$, 
	$\Sing f  = \{{x}^{2}+{y}^{2}=0\}$.
One can show that $f$ is a \emph{simple \L-map}, and therefore it is nice.
	The Milnor set is $M(f)  =    \{{x}^{2}+{y}^{2}=0\} \cup \{{a}^{2}+{b}^{2}+{z}^{2}+{w}^{2} -{x}^{2}-{y}^{2}=0\}$.	
 Note that $M(G) = M(f)\times \bR^{n}$. Since $f$ has  isolated critical value and satisfies condition \eqref{eq:main}, it follows by Lemma \ref{p4} that $G$ satisfies condition \eqref{eq:main} and therefore has a Milnor-Hamm fibration by Lemma \ref{full}. Its discriminant set $\Disc G =\{0\}\times \bR^n$ has positive dimension.
%remark that all examples up to now are full, i.e. surjective
\end{example}

%
%%%%%%%%%%%%%%%%%%%%%%%%%%%%
\section{The singular Milnor tube fibration}\label{s:sing}
%%%%%%%%%%%%%%%%%%%%%

We extend here the  definition of Milnor-Hamm fibration to that of a  ``tube fibration''  by including the discriminant in the picture.

\begin{definition}\label{d:stratif}
	Let  $G:(\bR^{m},0) \rightarrow (\bR^{p}, 0)$ be a non-constant  analytic map germ, $m\ge p >1$.
	Let $G_{\e}: B^{m}_{\e} \to \im G_{\e}$ denote the restriction of $G$ to a small ball. 
	
By the stratification theory, see e.g. \cite{GLPW, GM},  there exist  locally finite subanalytic Whitney stratifications $(\bW, \bS)$  of the source of $G_{\e}$ and of  its target, respectively, such that $\overline{\im G_{\e}}$ is a union of strata, that $\Disc G_{\e}$ is a union of strata,  and that $G_{\e}$ is  a stratified submersion.  In particular every stratum is a nonsingular, open and connected subanalytic set at the respective origin, and morever:
\begin{enumerate}
		\item [(i)] The image by $G_{\e}$ of a stratum of  $\bW$ is  a single stratum of $\bS$, % (why not onto?).
		\item[(ii)] The restriction $G_{|}:W_\alpha \to S_{\beta}$ is a submersion, where $W_{\alpha} \in\bW$, and $S_{\beta}\in \bS$.
	\end{enumerate}
	One calls  $(\bW, \bS)$ a \emph{regular stratification of the map germ $G$.}
	
	We say that $G$ is \emph{S-nice} whenever all the above subsets of the target are well-defined as subanalytic germs, 
	independent of the radius $\e$.
	
\end{definition}	

\begin{remark}\label{r:tame}
By Theorem \ref{p:fbarg-tame}, the map germs of type $f\bar g$ such that $f/g$ is an irreducible fraction are S-nice.
\end{remark}

\begin{definition}\label{d:tube1}
Let $G:(\bR^{m},0) \rightarrow (\bR^{p}, 0)$ be a non-constant S-nice analytic map germ.
We say that $G$ has  a \emph{singular Milnor tube fibration} relative to some regular stratification $(\bW, \bS)$, which is well-defined as germ at the origin by our assumption,  if for any small enough $\e > 0$ there exists  $0<\eta \ll \e$ such that the restriction:
\begin{equation}\label{eq:tube1}
G_| :  B^{m}_{\e} \cap G^{-1}( B^{p}_\eta \m \{ 0\} ) \to  B^{p}_\eta \m \{ 0\} 
\end{equation}
is a stratified locally trivial fibration which is independent, up to stratified homeomorphisms, of the choices of $\e$ and $\eta$. 
\end{definition}

What means more precisely ``independent, up to stratified homeomorphisms, of the choices of $\e$ and $\eta$'':  when replacing $\e$ by some $\e'<\e$ and $\eta$ by some small enough $\eta'<\eta$, then the  fibration  \eqref{eq:tube1} and the analogous fibration for $\e'$ and $\epsilon'$
have the same stratified image in the smaller disk $B^{p}_{\eta'} \m \{ 0\}$, and the fibrations are stratified diffeomorphic over this disk. This property is based on the fact that the image of $G$ is well-defined as a stratified set germ, which amounts to our assumption of ``S-nice''.

By \emph{stratified locally trivial fibration} we mean that for any stratum $S_{\beta}$, the restriction $G_{| G^{-1}(S_{\beta})}$ is a locally trivial stratwise fibration.
The fibres are of course empty over the complement of $\im G$, whereas the fibre over some connected stratum $S_{\beta} \subset \im G$ of $\bS$ is a singular stratified set, namely the union of all fibres of the strata of $\bW$ which map onto $S_{\beta}$.
%%%%%%%%%%%%%%
\medskip

We define the \emph{stratwise Milnor set} $M(G)$ as the union of the Milnor sets of the restrictions of $G$ to each stratum. Namely, let $W_\alpha \in \bW$ 
be the germ at the origin of some stratum, and let 
$M(G_{|W_\alpha})$ be the Milnor set as in Definition \ref{d:M}, namely:
\[M(G_{|W_\alpha}):=\left\lbrace x \in W_\alpha \mid \rho_{|W_\alpha} \not\pitchfork_x G_{|W_\alpha} \right\rbrace  \]
where $\rho_{|W_\alpha}$ denotes the restriction of the distance function $\rho$ to the subset $W_\alpha$.
\begin{definition}\label{d:Mstr}
We call $M(G):=\sqcup_{\alpha} M(G_{|W_\alpha})$
	the set of \textit{stratwise $\rho$-nonregular points} of $G$ with respect to the stratifications $\bW$ and $\bS$.
\end{definition}
Note  that $M(G)$  is  closed, due to the fact that $\bW$ is a Whitney (a) stratification. 
We then consider the following condition:
\begin{equation}\label{eq:main2}
\overline{M(G) \m  V_{G}} \cap V_{G} \subset     \{ 0\}.
\end{equation}
and remark that  \eqref{eq:main2} restricted to $M(G)\m G^{-1}(\Disc G)$ is condition \eqref{eq:main}.

In this new setting, the basic existence result is the following:
%at least the same results as in Section 2 (except the last one).

\begin{theorem} \label{t:tube}
	Let $G:(\bR^m, 0) \to (\bR^p,0)$ be a non-constant S-nice analytic map germ. If $G$ satisfies condition \eqref{eq:main2}, then $G$ has a singular Milnor tube fibration \eqref{eq:tube1}.
\end{theorem}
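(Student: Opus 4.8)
The plan is to carry over the proof of Lemma \ref{full} to the stratified situation, replacing the ordinary Ehresmann theorem by the Thom--Mather first isotopy lemma for Whitney stratified spaces with boundary. First I would fix a regular stratification $(\bW,\bS)$ of $G$ as in Definition \ref{d:stratif}; by the $S$-niceness hypothesis the relevant subsets of the target ($\overline{\im G}$, $\Disc G$, the strata $S_\beta$) are well defined germs at $0$, so the outcome will not depend essentially on this choice. Next I would invoke the local conical structure of subanalytic Whitney stratified sets: for every small enough $\e>0$ the sphere $S^{m-1}_\e$ is transverse to all strata $W_\alpha\in\bW$, and the refined partition $\{W_\alpha\cap B^m_\e\}\cup\{W_\alpha\cap S^{m-1}_\e\}$ is again a Whitney stratification, this time of a manifold with boundary. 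Intersecting further with the open set $G^{-1}(B^p_\eta\m\{0\})$ preserves the Whitney conditions, so one obtains a Whitney stratified space with boundary $N:=\overline{B^m_\e}\cap G^{-1}(B^p_\eta\m\{0\})$, on which $G$ restricts to a stratified map into $B^p_\eta\m\{0\}$ (with the induced stratification of the target).

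The core step is to turn condition \eqref{eq:main2} into a statement about the boundary of $N$. Since $M(G)$ is closed, the set $\overline{M(G)\m V_G}\cap S^{m-1}_\e$ is compact, and by \eqref{eq:main2} it is disjoint from $V_G$ (the only point of $\overline{M(G)\m V_G}\cap V_G$ near $0$ is $0$, which does not lie on $S^{m-1}_\e$). Hence $G$ sends it into $\{\,\|y\|\ge\eta\,\}$ for some $\eta>0$, while $M(G)\cap V_G$ is sent to $0$; therefore, after shrinking, $S^{m-1}_\e\cap G^{-1}(B^p_\eta\m\{0\})\cap M(G)=\emptyset$ for $0<\eta\ll\e$. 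By the definition of the stratwise Milnor set (Definitions \ref{d:M}, \ref{d:Mstr}), this means that for each stratum $W_\alpha$ meeting $G^{-1}(B^p_\eta\m\{0\})$ the sphere $S^{m-1}_\e$ is transverse, inside $W_\alpha$, to every fibre of $G_{|W_\alpha}$; since $G_{|W_\alpha}\colon W_\alpha\to G(W_\alpha)\in\bS$ is a submersion by regularity of the stratification, the restriction of $G$ to the boundary stratum $W_\alpha\cap S^{m-1}_\e\cap G^{-1}(B^p_\eta\m\{0\})$ is still a submersion onto the stratum $G(W_\alpha)$ (transversality of a submanifold to all fibres of a submersion forces the restricted map to remain submersive). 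On the interior strata $G$ is a submersion onto a stratum of $\bS$ by the very definition of a regular stratification, and properness of $G_|\colon N\to B^p_\eta\m\{0\}$ and of its restriction to $\partial N$ is immediate, because preimages of compacta are closed subsets of the compact ball $\overline{B^m_\e}$.

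At this point $G_|\colon N\to B^p_\eta\m\{0\}$ is a proper stratified submersion, submersive on interior strata and on boundary strata, so the Thom--Mather first isotopy lemma with boundary (the stratified Ehresmann theorem, cf. \cite{GLPW, GM, JM}) gives that it is a stratified locally trivial fibration; restricting to $B^m_\e$ yields \eqref{eq:tube1}, and the fibre over a stratum $S_\beta\subset\im G$ is by construction the union of the fibres of those $W_\alpha$ mapping onto $S_\beta$, as required by Definition \ref{d:tube1}. Finally, for independence up to stratified homeomorphism when $\e$ is replaced by $\e'<\e$ and $\eta$ by some $\eta'\ll\eta$: the same estimate, applied uniformly to the radii in $[\e',\e]$, shows that the annular region $(\overline{B^m_\e}\m B^m_{\e'})\cap G^{-1}(B^p_{\eta'}\m\{0\})$ carries no stratwise Milnor points, so one integrates controlled vector fields tangent to the strata and to the spheres to build a stratified homeomorphism between the two fibrations over $B^p_{\eta'}\m\{0\}$; here $S$-niceness is precisely what guarantees that the base stratified sets agree as germs, so the comparison is meaningful. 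I expect the main obstacle to be exactly this last step, together with the bookkeeping needed to check that the Whitney conditions and the submersivity pass to the boundary strata; the construction of the fibration for a single pair $(\e,\eta)$ is the straightforward stratified analogue of Lemma \ref{full}.
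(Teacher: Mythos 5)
Your proposal is correct and follows essentially the same route as the paper's proof: translate condition \eqref{eq:main2} into transversality of the small sphere (hence of the refined Whitney stratification $S^{m-1}_\e\cap\bW$) to the stratified fibres of $G$ inside the tube, conclude that \eqref{eq:tube1} is a proper stratified submersion, and apply the Thom--Mather isotopy theorem, with independence of $(\e,\eta)$ obtained from the same absence of stratwise Milnor points across radii. You merely spell out details (boundary strata, compactness estimate, controlled vector fields) that the paper leaves implicit.
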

\begin{proof} 
 Let us  fix a regular stratification $(\bW, \bS)$, cf Definition \ref{d:stratif}.
 Condition \eqref{eq:main2} implies the existence of  $\e_0>0$ such that, for any  $0<\e <\e_0$, there exists $\eta$, $0<\eta \ll \e$, such that every restriction map
\begin{equation}\label{eq:restrG}
G_{|}:  W_{\alpha}\cap \overline{B^m_\e} \cap G^{-1}(B^{p}_\eta \m \{0\}) \to  S_{\beta} \cap  B^{p}_{\eta} \m \{0\}
\end{equation}
is a submersion on a manifold with boundary.  Indeed, 
since the sphere $S^{m-1}_\e$ is transversal to all the finitely many strata of the Whitney stratification $\bW$ at $0\in \bR^{n}$,
it follows that  the intersection $S^{m-1}_\e \cap \bW$ is a Whitney stratification $\bW_{S, \e}$ which refines $\bW$. By condition \eqref{eq:main2}, the map $G$ is not only transversal to  the  stratification $\bW$ but also to the stratification $\bW_{S, \e}$, for any $0<\e < \e_{0}$. It then follows that the map \eqref{eq:restrG} is a stratified submersion and it is proper, thus it is  a stratified fibration by Thom-Mather Isotopy Theorem.  Moreover, condition \eqref{eq:main2} tells that this fibration is independent of $\e$ and $\eta$ up to stratified homeomorphisms. 	
\end{proof}

By construction $\Disc G$ and the complement $\overline{\im G} \m \Disc G$  are unions of strata. In particular,  if 
the singular tube fibration exists, then the Milnor-Hamm fibration exists too.  
Thus, from the above proof one immediately derives:
\begin{corollary}\label{c:tube-hamm}
Under the hypotheses of Theorem \ref{t:tube},  the map $G$ has a Milnor-Hamm fibration over $B^{p}_{\eta} \m \Disc G$, with nonsingular Milnor fibre over each connected component. 
\fin
\end{corollary}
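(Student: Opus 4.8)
The plan is to obtain the Milnor-Hamm fibration by restricting the singular tube fibration \eqref{eq:tube1} of Theorem \ref{t:tube} to the part of the target lying outside the discriminant, and then to upgrade the resulting \emph{stratified} fibration to an honest $C^{\infty}$-fibration with nonsingular fibres.

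First I would use that, in a regular stratification $(\bW,\bS)$ of $G$ as in Definition \ref{d:stratif}, the discriminant $\Disc G$ is by construction a union of strata of $\bS$, and hence so is its complement $\overline{\im G}\m\Disc G$; thus $B^{p}_{\eta}\m\Disc G$ is a union of strata of $\bS$, and the restriction of the stratified fibration \eqref{eq:tube1} to $G^{-1}(B^{p}_{\eta}\m\Disc G)$ is again a stratified locally trivial fibration, now over $B^{p}_{\eta}\m\Disc G$. Next I would observe that over this set there are no critical points: since $\Disc G\supseteq\overline{G(\Sing G)}$ by Definition \ref{d:disc}, one has $G^{-1}(B^{p}_{\eta}\m\Disc G)\subseteq B^{m}_{\e}\m\Sing G$, so $G$ is a genuine submersion on $\overline{B^{m}_{\e}}\cap G^{-1}(B^{p}_{\eta}\m\Disc G)$, which is therefore a manifold with boundary $S^{m-1}_{\e}\cap G^{-1}(B^{p}_{\eta}\m\Disc G)$. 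The transversality of $S^{m-1}_{\e}$ to $G$ over $B^{p}_{\eta}\m\{0\}$ established in the proof of Theorem \ref{t:tube} restricts to transversality over $B^{p}_{\eta}\m\Disc G$, so
\[ G_{|}:\ \overline{B^{m}_{\e}}\cap G^{-1}(B^{p}_{\eta}\m\Disc G)\ \to\ B^{p}_{\eta}\m\Disc G \]
is a proper submersion on a manifold with boundary. By Ehresmann's Theorem it is a locally trivial $C^{\infty}$-fibration over every connected component $\cC_{i}$ of $B^{p}_{\eta}\m\Disc G$, with nonsingular fibre (empty over those $\cC_{i}$ not contained in $\im G$); this is exactly the fibration \eqref{eq:tube}.

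Finally, independence of $\e$ and $\eta$ comes for free: condition \eqref{eq:main2} restricted to $M(G)\m G^{-1}(\Disc G)$ is precisely condition \eqref{eq:main}, so one may invoke Lemma \ref{full} directly, or equivalently note that the stratified isotopy produced at the end of the proof of Theorem \ref{t:tube} restricts to an isotopy of the above fibration over the smaller disk. I do not expect any serious obstacle here; the only point worth a line of care is the remark that $B^{p}_{\eta}\m\Disc G$ is a union of strata of $\bS$, so that restricting the stratified fibration is legitimate, and this is immediate from Definition \ref{d:stratif}.
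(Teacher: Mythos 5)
Your proposal is correct and follows essentially the same route as the paper: the paper derives the corollary immediately from the proof of Theorem \ref{t:tube}, using exactly your two observations, namely that $\Disc G$ and its complement are unions of strata (so the stratified fibration restricts, and $G$ is a genuine proper submersion there since $\Disc G\supseteq\overline{G(\Sing G)}$), together with the remark that condition \eqref{eq:main2} restricted to $M(G)\m G^{-1}(\Disc G)$ gives condition \eqref{eq:main}, hence Lemma \ref{full} applies and yields the independence of $\e$ and $\eta$. Your write-up merely makes these steps explicit, which is fine.
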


\begin{example}\label{ex:notfull}
Let $G:(\mathbb{R}^3,0)\to (\mathbb{R}^2,0)$,  $G(x,y,z)=(xy,z^2)$. One has: 	
\[ 
\begin{array}{ll}
	V_G =\{x=z=0\}\cup \{y=z=0\}  & \im G = \bR \times \bR_{\ge 0} \subsetneq \bR^{2}  \medskip\\
	\Sing G= \{x=y=0\}\cup \{z=0\}  & G(\Sing G) = \{0\} \times \bR_{\ge 0} \cup  \bR \times \{0\}  \medskip\\
	\Disc G= \{(0,\beta)\,|\,\beta \ge0\}\cup \{(\lambda,0)\,|\, \lambda \in \mathbb{R}\} \medskip\ &
	G^{-1}(\Disc G)=\{x=0\}\cup\{y=0\}\cup \{z=0\}  \medskip\\
	M(G)= \{x=\pm y\}\cup\{z=0\} & \overline{M(G) \m G^{-1}(\Disc G)} = \{x=\pm y\}.\\
	\end{array}	
\]
	\smallskip
It follows that $G$ is nice and  satisfies  the condition \eqref{eq:main}, thus $G$ has a Milnor-Hamm fibration by Lemma \ref{full}.
The complement  $\bR^{2}\m \Disc G$ consists of 3 connected components.  We have: the fibre over $\bR \times \bR_{<0}$ is empty; the fibre over $\bR_{>0} \times \bR_{>0}$ and the fibre over $\bR_{<0} \times \bR_{>0}$  are  two non-intersecting hyperbolas, with 4 connected components.

It moreover follows that $G$ is S-nice and satisfies the condition \eqref{eq:main2},  thus it has a singular tube fibration by Theorem \ref{t:tube}. The discriminant has 4 strata, with singular fibres over each of them, as follows: the positive vertical axis, the fibre over which has two disconnected components each of which being two intersecting lines; the positive and the negative horizontal axis, the fibres over which are both hyperbolas with two components; and the origin, the fibre over which is two intersecting lines.

\end{example}
%%%%%%%%%%%%%%%%%%%%%%%%%%%
\subsection{Relation with the Thom regularity}
Let us explain shortly the relation to Thom regularity, extending our discussion in \S \ref{s:partial-thom}.

\begin{definition}\label{d:thom-general} 
Let $G:(\bR^m, 0) \to (\bR^p,0)$ be a non-constant analytic map germ. We say that $G$ is \textit{Thom regular at $V_G$} if there
exists a Whitney stratification $(\bW, \bS)$ like in Definition \ref{d:stratif} such that  $0$ is a point stratum in $\bS$, that $V_{G}$ is a union of strata of $\bW$, and that the Thom (a$_{G}$)-regularity condition is satisfied at any stratum of $V_{G}$.
\end{definition}

Note that we do not ask the full Thom regularity of the map $G$, i.e. we do not ask that $\bW$ is a Thom regular stratification; we only ask that Thom regularity holds at the fibre over 0. This condition is enough to insure condition \eqref{eq:main2} and thus we derive the following statement from Theorem \ref{t:tube} and from the proof of Theorem \ref{c4} adapted to our new setting:

\begin{corollary}\label{c:tube-gen}
	Let $G:(\bR^m, 0) \to (\bR^p,0)$ be a non-constant  S-nice analytic map germ. If $G$ is Thom regular at $V_G$, $\dim V_{G}>0$,  then $G$ has a singular Milnor tube fibration \eqref{eq:tube1}. 

In particular, if  $V_G  \cap \Sing G  = \{0\}$ and $\dim V_{G}>0$, then $G$ has a Milnor-Hamm fibration \eqref{eq:tube}.
	\fin
\end{corollary}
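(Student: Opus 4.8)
\smallskip
\noindent\textbf{Proof plan.} The plan is to deduce the statement from Theorem \ref{t:tube} and Corollary \ref{c:tube-hamm}, so the whole task reduces to checking condition \eqref{eq:main2} for a suitable regular stratification. First I would fix a Whitney regular stratification $(\bW,\bS)$ of the germ of $G$ as in Definition \ref{d:thom-general}: the origin is a point stratum of $\bS$, the fibre $V_G$ is a union of strata $\{H_\beta\}$ of $\bW$, and for every stratum $H_\beta\subseteq V_G$ and every stratum $W_\alpha$ with $H_\beta\subseteq\overline{W_\alpha}\m W_\alpha$ the pair $(W_\alpha,H_\beta)$ satisfies the Thom $(a_G)$-regularity condition. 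Since $V_G$ is a union of strata, every such $W_\alpha$ is disjoint from $V_G$, and $M(G)\m V_G=\bigsqcup_{W_\alpha\not\subseteq V_G}M(G_{|W_\alpha})$.

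The key intermediate step would be the inclusion
\[
\overline{M(G)\m V_G}\cap (V_G\m\{0\})\ \subseteq\ \bigcup_{\beta}\crit(\rho_{|H_\beta}),
\]
the union being over the strata of $V_G$. To prove it, first record the pointwise description of the stratwise Milnor set: since $G$ is a stratified submersion, on each $W_\alpha$ the relative tangent space $\ker d(G_{|W_\alpha})_x$ has constant rank, and because $\grad\rho(x)$ is proportional to the radial vector $x$, one checks that $x\in M(G_{|W_\alpha})$ if and only if $x\perp\ker d(G_{|W_\alpha})_x$. Now take $q\ne 0$ in the left-hand side; a limit argument over the finitely many strata gives a sequence $x_n\to q$ with $x_n\in M(G_{|W_\alpha})$ for a single $W_\alpha$, necessarily with $q\in\overline{W_\alpha}\m W_\alpha$, hence, by the frontier condition, with $H_\beta\subseteq\overline{W_\alpha}\m W_\alpha$ where $H_\beta$ is the stratum of $q$. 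Passing to a subsequence, $\ker d(G_{|W_\alpha})_{x_n}$ converges in the appropriate (compact) Grassmann bundle to some $\mathcal H$; the orthogonality $x_n\perp\ker d(G_{|W_\alpha})_{x_n}$ passes to the limit to give $q\perp\mathcal H$, while the Thom $(a_G)$-regularity of $(W_\alpha,H_\beta)$ at $q$ gives $T_qH_\beta=\ker d(G_{|H_\beta})_q\subseteq\mathcal H$, since $G_{|H_\beta}\equiv 0$. Hence $q\perp T_qH_\beta$, i.e. $\grad(\rho_{|H_\beta})(q)=0$, which is exactly the claim.

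From this, condition \eqref{eq:main2} follows by the Curve Selection Lemma: if it failed there would be a sequence $q_k\to 0$, $q_k\ne 0$, lying — after passing to a subsequence, using finiteness of the stratification — in a fixed subanalytic set $\crit(\rho_{|H_\beta})$; an analytic arc $\gamma:[0,\delta)\to\bR^m$ with $\gamma(0)=0$ and $\gamma((0,\delta))\subseteq\crit(\rho_{|H_\beta})\subseteq H_\beta$ would then satisfy $\tfrac{d}{dt}|\gamma(t)|^2=2\langle\gamma(t),\dot\gamma(t)\rangle=0$ because $\dot\gamma(t)\in T_{\gamma(t)}H_\beta\perp\gamma(t)$, forcing $|\gamma|^2\equiv 0$, which contradicts $0\notin H_\beta$. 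Thus $G$ satisfies \eqref{eq:main2}, and Theorem \ref{t:tube} together with Corollary \ref{c:tube-hamm} yield both asserted fibrations. For the last assertion, when $V_G\cap\Sing G=\{0\}$ and $\dim V_G>0$ the set $V_G\m\{0\}$ is a smooth manifold along which $G$ is a submersion with tangent space $\ker dG$, so the stratification with strata the connected components of $V_G\m\{0\}$, a Whitney refinement of $B^m_{\e}\m V_G$, and $\{0\}$ is Thom $(a_G)$-regular at $V_G$ (the relevant limit equals $\ker dG$ itself, by continuity); as $G$ is S-nice by assumption, the first part applies, and this recovers Theorem \ref{c4} (in which niceness is anyway automatic by Lemma \ref{l:tame}(b)).

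The main obstacle is the second paragraph: controlling the limit of the relative tangent spaces $\ker d(G_{|W_\alpha})_{x_n}$ in the Grassmann bundle and coupling it with the Thom $(a_G)$-condition; the orthogonality description of the stratwise Milnor set and the Curve Selection Lemma step are then routine, and all the stratification-theoretic input (existence of $(\bW,\bS)$, frontier condition, subanalyticity of the critical sets) is standard.
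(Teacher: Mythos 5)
Your handling of the first (main) assertion is correct and follows exactly the route the paper intends but leaves implicit: you verify that Thom regularity at $V_G$ in the sense of Definition \ref{d:thom-general} forces condition \eqref{eq:main2} -- via the orthogonality description $x\perp\ker d(G_{|W_\alpha})_x$ of the stratwise Milnor set, the passage to a limit of the relative tangent spaces coupled with the Thom $(a_G)$ condition for the pairs $(W_\alpha,H_\beta)$, and the Curve Selection Lemma showing that $\crit(\rho_{|H_\beta})$ cannot accumulate at the origin -- and then you invoke Theorem \ref{t:tube} and Corollary \ref{c:tube-hamm}. This is a correct filling-in of the sentence ``this condition is enough to insure condition \eqref{eq:main2}''. (A cosmetic point: apply the Curve Selection Lemma to $\crit(\rho_{|H_\beta})\m\{0\}$, so the contradiction is with $\gamma(t)\neq 0$ for $t>0$; the definitions do not force $0\notin H_\beta$.)

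The genuine gap is in your treatment of the ``in particular'' clause. Definition \ref{d:thom-general} requires a stratification \emph{as in Definition \ref{d:stratif}}: the target stratification must have $\Disc G$ (in general positive dimensional) as a union of strata, images of source strata must be single target strata, and all restrictions must be submersions. Consequently the source necessarily contains strata inside $G^{-1}(\Disc G\m\{0\})$ whose closures meet $V_G\m\{0\}$, and for such a stratum $W_\alpha$ mapping onto a stratum $S_\beta$ one has $T_x(G_{|W_\alpha})=\ker dG_x\cap T_xW_\alpha$, of dimension $\dim W_\alpha-\dim S_\beta$, which is in general a proper subspace of $\ker dG_x$; when this dimension is smaller than $m-p$ its limits cannot contain $T_qV_G$, so your claim that for ``a Whitney refinement of $B^m_\e\m V_G$'' the relevant limit ``equals $\ker dG$ itself, by continuity'' is unjustified, and Thom regularity at $V_G$ does not follow as you state it. The fix is either to choose, near $V_G\m\{0\}$ (where $G$ is a submersion, since $\overline{\Sing G}\cap V_G=\{0\}$), the source strata to be the full preimages $G^{-1}(S_\beta)$, for which $T_x(G_{|W_\alpha})=\ker dG_x$ and continuity does give the Thom condition along the components of $V_G\m\{0\}$; or, much more simply -- and this is what the paper's phrasing points to -- to observe that the ``in particular'' conclusion is only the Milnor-Hamm fibration, which is exactly Theorem \ref{c4}, whose hypotheses are contained in those of the corollary, so no stratification construction is needed for that clause at all.
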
 

%As Thom regularity at $V_G$ obviously  implies $\partial$-Thom regularity at $V_G$, it also follows  from Corollary %\ref{c:tube-hamm} and Corollary \ref{c:tube-gen}
% that if $G$ is Thom regular at $V_G$, then a Milnor-Hamm fibration exists. 
Theorem \ref{c3}  already provides classes of maps $f\bar{g}$  which are S-nice,  Thom regular at $V_{f\bar{g}}$,
and thus have singular Milnor tube fibration.  Here is an example with this property:

\begin{example} Let $f,g :\mathbb{C}^2 \to \mathbb{C}$ given by $f(x,y) =xy+x^{2}$ and $g(x,y)= y^{2}$. One has $V_{(f,g)} = \{(0,0)\}$ and $\Sing(f,g) = \{y=0\}\cup \{y=-2x\}$, thus $(f,g)$ is obviously  Thom regular. However $\Disc (f,g) = \{(x^2,0)\,|\, x\in\mathbb{C} \}\cup \{(-x^2,4x^2)\,|\, x\in\mathbb{C} \}$ and therefore $f\bar g$ has  non-isolated critical value. It then follows from Corollary \ref{c:tube-gen}  that $f\bar{g}$ is Thom regular, hence it  has a Milnor-Hamm fibration, and also a singular Milnor tube fibration.
\end{example}

\subsection{An example without  Thom regularity}
The singular Milnor tube fibration  may exist without the Thom regularity, as shown by the following example:

\begin{example} %remark that G is full
Let us consider again the real map germ $G$ from Example \ref{ex:1couple}, where it was shown that $G$ is nice but not $\partial$-Thom regular at $V_{G}$, hence not Thom regular at $V_{G}$. Let $(\bW, \bS)$ be the following stratification  of the source and of the target of $G$: $W_{1}:=B_{\e}^{4}\m G^{-1}(\Disc G)$,  $W_{2}:=\{(0,0,z, w)\in \bR^{4} \mid w>0\}$, $W_{3}:=\{(0,0,z, w)\in \bR^{4} \mid w<0\},$ $W_{4}:=\{(0,0,z, 0)\in \bR^{4} \mid z>0\},$ $W_{5}:=\{(0,0,z, 0)\in \bR^{4}\mid z <0\}$, $W_{6}:=\{(0, 0, 0, 0)\}$,  and  $S_{1}:=B_{\eta}^{3}\m \Disc(G)$, $S_{2}:=\{((0,0,\gamma)\in \bR^{3} \mid\gamma >0)\},$ $S_{3}:=\{((0,0,\gamma)\in \bR^{3} \mid \gamma <0)\},$ $S_{4}=S_{5}=S_{6}:=\{(0,0,0)\}$. 

The restriction maps are $G_{j}:= G_{|_{W_{j}}}: W_{j}\to S_{j},$ $j=1,\ldots, 6.$  We had shown in Example \ref{ex:1couple} that $G_{1}$ satisfies condition \eqref{eq:main}. Each restriction map $G_{j}$, $2\leq j\leq 6$, is onto, in particular, it follows that $G$ is also S-nice.

The Milnor sets of the remaining maps are $M(G_{2})=M(G_{3})=$ the $Ow$-axis, and $M(G_{i})\m V_{G}=\emptyset,$ for $i=4,5,6$. It follows that condition \eqref{eq:main2} holds, and thus $G$ has a singular Milnor tube fibration by Theorem \ref{t:tube}. 
 
Since $\Disc G = \{(0,0,\gamma) \in \bR^{3} \mid \gamma\in \bR \}$ does not disconnect the target space, the Milnor fibre outside the discriminant set (i.e. the Milnor-Hamm fibre) is unique and consists of 3 disconnected lines. The fibre over each stratum of the discriminant is a single line. 
\end{example}

%\begin{example} Consider $G(x,y,z)=(xy,z^2).$ By hand calculations, we get that it satisfies the singular Milnor tube fibration, but it is not %full. The problem is that it has 20 (twenty) Whitney strata in the source and 4 (four) Whitney strata on the target. One has three different %types of fibers: empty, hyperbolas and cross-cap $(x.y=0)$. 
%\end{example}

%%%%%%%%%%%%%%%%%%%%%%%%%%%

\end{document}